\title{A residual concept for Krylov subspace evaluation of
the $\varphi$ matrix function}
\author{%
M.A. Botchev%
\thanks{Corresponding author. Keldysh Institute of Applied Mathematics,
Russian Academy of Sciences, Miusskaya~Sq.~4, Moscow 125047,
Russia, \email{botchev@ya.ru}.}
\and
L.A. Knizhnerman%
\thanks{Mathematical Modelling Department, Central Geophysical Expedition,
Narodnogo Opolcheniya St., 38, Bldg.~3,
Moscow 123298, Russia, \email{lknizhnerman@gmail.com}.}
\and
E.E. Tyrtyshnikov%
\thanks{Marchuk Institute of Numerical Mathematics, Russian Academy of Sciences,
Gubkin St.~8, Moscow 119333, Russia, \email{eugene.tyrtyshnikov@gmail.com}.
Work of this author is supported by the Russian Science Foundation grant
No.~19-11-00338.}
}
\def\be#1\ee{\begin{equation}#1\end{equation}}
\newcommand{\bea}{\begin{eqnarray}}
\newcommand{\eea}{\end{eqnarray}}
\newcommand{\beas}{\begin{eqnarray*}}
\newcommand{\eeas}{\end{eqnarray*}}
\newcommand{\Int}{\int\limits}
\newtheorem{remark}{Remark}
\newcommand{\tT}{{\mbox{\it \tiny T}}}
\renewcommand{\Re}{\mathop{\rm Re}\nolimits}
\newcommand{\Cc}{\mathbb{C}}
\newcommand{\conv}{\mathtt{convergence}}
\newcommand{\epsi}{\epsilon}
\newcommand{\geqs}{\geqslant}
\newcommand{\Hh}{\underline{H}}
\newcommand{\Kk}{\mathcal{K}}
\newcommand{\leqs}{\leqslant}
\newcommand{\Pe}{\mathit{P\!e}}
\newcommand{\phip}{\texttt{phip}}
\newcommand{\phipm}{\texttt{phipm}}
\newcommand{\phiRT}{\texttt{phiRT}}
\newcommand{\phiv}{\texttt{phiv}}
\renewcommand{\Re}{\mathrm{Re}}
\newcommand{\Rr}{\mathbb{R}}
\newcommand{\Rrnn}{\mathbb{R}^{n\times n}}
\newcommand{\tol}{\mathtt{tol}}
\begin{document}
\maketitle

\begin{abstract}
  An efficient Krylov subspace algorithm for computing actions
  of the $\varphi$ matrix function for large matrices is proposed.
  This matrix function is widely used in exponential time integration,
  Markov chains and network analysis and many other applications.
  Our algorithm is based on a reliable residual based stopping criterion and
  a new efficient restarting procedure.
  For matrices with numerical range in the stable complex half plane,
  we analyze residual convergence and prove
  that the restarted method is guaranteed to converge for
  any Krylov subspace dimension.
  Numerical tests demonstrate efficiency of our approach for solving
  large scale evolution problems resulting from discretized in space
  time-dependent PDEs, in particular, diffusion and convection--diffusion
  problems.
\end{abstract}

\begin{keywords}
  phi matrix function, Krylov subspace methods, exponential time integration,
  matrix exponential, restarting, exponential residual
\end{keywords}

\begin{AMS}
  65F60;  
  65M20;  
  65L05   
\end{AMS}

\section{Introduction}
Exponential time integration is an actively developing research
field~\cite{HochbruckOstermann2010}, with numerous successful applications in
challenging large scale computations, such as
elastic wave equations~\cite{Hochbruck_Pazur_ea2015},
wind farm simulations~\cite{Fu_ea2019},
power delivery network analysis~\cite{Wang_ea2019},
large circuit analysis~\cite{Zhuang_ea2016}
vector finite element discretizations of Maxwell's equations~\cite{Gautschi2006}
and
photonic crystal modeling~\cite{Botchev2016,BotchevHanseUppu2018}.

In this paper we propose a Krylov subspace method for solving initial-value
problem (IVP)
\begin{equation}
\label{ivp}
y'(t) = -Ay(t) + g,\quad y(0) = v,\quad t\geqs 0,
\end{equation}
where the matrix $A\in\Rr^{n\times n}$ and vectors $g,v\in\Rr^n$
($g\ne 0$) are given
and the problem size $n$ is supposed to be large, i.e., $n\gg 1$.
Using the variation of constants formula (see,
e.g.,~\cite[Chapter~2.3]{HundsdorferVerwer:book})
$$
y(t) = \exp(-tA)v + \left[\int_0^t \exp(-(t-s)A)\,d s \right]g,
\quad t\geqs 0,
$$
we can write
$y(t)$ in the form
\begin{equation}
\label{phim}
y(t) = v + t\varphi(-tA)(g-Av),\quad t\geqs 0.
\end{equation}
Here $\varphi(-tA)(g-Av)$ is a matrix-vector product with the
matrix function $\varphi(-tA)$ and $\varphi$ is defined as
\begin{equation}
\label{phi}
\varphi(z)\equiv \frac{e^z-1}{z}, \quad z\in\Cc, 
\end{equation}
where we set $\varphi(0)=1$ so that $\varphi$ is an entire function.
Thus, the matrix function $\varphi$ provides the
exact solution~\eqref{phim} to system~\eqref{ivp}.
In other words, solving IVP~\eqref{ivp} is equivalent to
evaluating the matrix function $\varphi$ times a vector in~\eqref{phim}.
The method for solving~\eqref{ivp} we propose in this paper
is based on an efficient Krylov subspace evaluation of the matrix function
$\varphi(-tA)$ in~\eqref{phim}.

For homogeneous ODEs (ordinary differential equations), i.e., for $g=0$,
the exact solution of~\eqref{ivp} takes
the well-known form $y(t)=\exp(-tA)v$.
Therefore,
computing matrix-vector products with the matrix function $\varphi$
and the matrix exponential is an important task in exponential
time integration~\cite{HochbruckOstermann2010},
occurring independently as well as within higher-order ODE Krylov subspace
methods~\cite{Hochbruck_ea2009,Gaudreault_ea2018_kiops} or
splitting schemes~\cite{HansenOstermann2016,BotchevFaragoHorvath2009}.
For large problems a direct evaluation of the matrix
function based on dense linear algebra
techniques~\cite{19ways,Higham_bookFM} is out of question.
Therefore, the matrix-vector products with the matrix
function are approximately computed, rather than the matrix function
itself.
Krylov subspace methods have been a successful tool for doing
this since the end of the eighties;
we list
chronologically~\cite{ParkLight86,Henk:f(A),DruskinKnizh89,Knizh91,Saad92,%
DruskinKnizh95,HochLub97}.
This progress in numerical linear algebra has triggered advances
in numerical time integration
methods (see, e.g.,~\cite{GallSaad92,HochLubSel97,CelledoniMoret97})
and a revival of the exponential time integrators~\cite{HochbruckOstermann2010},
which have been developed since
the sixties~\cite{Certaine1960,Legras1966,Lawson1967}.

In this work we are interested in solving large scale problems~\eqref{ivp}
which result, for instance, within a method of lines,
when spatially discretized PDEs (partial differential equations)
have to be efficiently solved in time.
This means that $A$ does not have to be symmetric and is often far from a normal matrix.
Furthermore, fine and nonuniform grids as well as realistic coefficient values
in the PDE may lead to a stiffness in the problem, i.e., the eigenvalues
of $A$ may significantly vary in magnitude~\cite{Fedorenko94stiff}.
Taking into account unavoidable space discretization errors,
error produced by a time integrator does not have to be extremely small
in these problems. Usually absolute time errors in the range $[10^{-7},10^{-3}]$
suffice and this is the range we aim here.
As we see in numerical tests, delivering accuracies in this range
can be a challenge for existing exponential solvers.
The proposed method appears to carry out this task successfully.

Krylov subspace exponential integrators are good candidates
for solving large scale problems with sparse nonnormal matrices $A$,
such as space-discretized PDEs.
Indeed, these solvers have sufficient stability properties
and often lead to an efficient compromise between standard implicit time
integrators (which may occur too expensive in large scale applications)
and explicit stabilized solvers.
Moreover, as Krylov subspace methods 
can significantly profit from eigenvalue clustering~\cite{Henk:book},
their application for solving stiff problems can be
successful~\cite{GearSaad,Chan86:ODE,VODPK,MRAIpap}
because stiffness typically
means a large discrepancy in
eigenvalues.
Other approaches, not employing Krylov subspaces, for evaluating
matrix exponential and related matrix functions of large sparse matrices
include the Chebyshev polynomials, scaling and squaring with Taylor or Pad\'e
approximations and contour integrals~\cite{TalEzer89,DeRaedt03,SchmelzerTrefethen07,CaliariOstermann09,AlmohyHigham2011}.

Our approach for computing $\varphi$ matrix-vector products
is essentially based on two ingredients presented in this paper:
(i)~a residual notion
for $\varphi$ matrix functions and (ii)~a restarting,
i.e., an algorithm aiming to preserve
convergence while keeping the Krylov dimension $k$ bounded,
$k\leqs k_{\max}$~\cite{EiermannErnst06}.
The residual notion leads to a reliable stopping criterion
and facilitates the proposed restarting. 
Restarting is often indispensable for solving large scale problems
because the size $n$ of the matrix $A$ tends to be extremely large
in real life applications.  Thus, storing $k_{\max}$ Krylov basis vectors
of size~$n$ can be prohibitively expensive for $n\gg 1$
even if $k_{\max}$ is moderately large.
Therefore, in the tests we restrict $k_{\max}$ by $30$.
Numerous restarting algorithms for Krylov subspace
evaluations of the matrix exponential have been proposed,
for instance~\cite{Afanasjew_ea08,CelledoniMoret97,Eiermann_ea2011,PhD_Niehoff,BGH13}.
Restarting for general matrix functions
include~\cite{EiermannErnstGuttel11,GuettelFrommerSchweitzer2014}.
Two restarting techniques have been designed specifically for the $\varphi$
matrix functions, namely~\cite{EXPOKIT} and~\cite{NiesenWright12}.
They both are based on error estimates and choose a time interval length $t$
such that solving~\eqref{ivp} by $k_{\max}$ Krylov
steps yields a sufficiently small error.  In the numerical experiments
presented below we show that our method performs favorably compared
to the codes based on these techniques.

Furthermore, we provide convergence analysis of the method,
obtaining upper bounds for the residual norm with no asymptotic
estimates involved, and show that the
restarted algorithm is guaranteed to converge for any restart
length, i.e., for any Krylov dimension~$k_{\max}$.

The structure of this paper is as follows.
In Section~\ref{s:setting} we
describe details of the Krylov subspace approximation and
introduce a residual concept for the $\varphi$ matrix function.
In Section~\ref{s:rst} some observations on the
residual behavior are given and the restarting algorithm,
based on the observed behavior, is formulated.
Section~\ref{s:Faber} is devoted to residual convergence
analysis for both symmetric and nonsymmetric matrices.
Based on this analysis, in Section~\ref{s:conv} we show
convergence of the restarted algorithm.
Section~\ref{s:num} presents numerical experiments.
Finally, some conclusions are drawn in Section~\ref{s:concl}.

\section{Krylov subspace evaluations of the $\varphi$ matrix function}
\label{s:setting}
\subsection{Residual notion for the $\varphi$ matrix function}
\label{s:Phi_res}
Throughout the paper we assume that
\begin{equation}
\label{posdef}
\Re (z^*Az)\geqs 0,\quad \forall z\in\Cc^n,  
\end{equation}
where the function $\Re(\cdot)$ is the real part of its argument,
and, unless reported otherwise,
$\|\cdot\|$ denotes the vector or matrix 2-norm.

To keep our presentation simpler, we first assume that
the initial value vector in~\eqref{ivp} is zero: $v=0$.
In this case relation~\eqref{phim} takes a form
\begin{equation}
  \tag{\ref{phim}$'$}
y(t) = t\varphi(-tA)g,\quad t\geqs 0,  
\end{equation}
and, thus, to solve~\eqref{ivp} we have to evaluate the action
of the matrix function $\varphi(-tA)$ on the vector $g$.
Let upper Hessenberg matrix $\Hh_k\in\Rr^{(k+1)\times k}$ and
matrix
$$
V_{k+1}=
\begin{bmatrix}
v_1 & v_2 & \dots & v_k & v_{k+1}  
\end{bmatrix}
\in\Rr^{n\times(k+1)}, \qquad v_1 = g/\|g\|,
$$
with orthonormal columns $v_j$, be the matrices
which result
after carrying out $k$ steps of the Arnoldi process.
These matrices 
satisfy the Arnoldi decomposition~\cite{GolVanL,Parlett:book,Henk:book,SaadBook2003}
\begin{equation}
\label{Arn}
AV_k=V_{k+1}\Hh_k = V_kH_k + h_{k+1,k}v_{k+1}e_k^T.
\end{equation}
Here $H_k=V_k^TAV_k\in\Rr^{k\times k}$ consists of the first $k$ rows of $\Hh_k$
and the columns $v_1$, \dots, $v_k$ span the Krylov subspace
$$
\Kk_k(A,g) = \mathrm{span} (g, Ag, \dots, A^{k-1}g).
$$
A standard way to compute a Krylov subspace approximation $y_k(t)$ to $y(t)$
in~(\ref{phim}$'$)
is to set~\cite[Chapter~11]{Henk:book}
\begin{equation}
\label{yk}
y_k(t):= V_k t \varphi(-tH_k) (\beta e_1), 
\end{equation}
with $\beta=\|g\|$ and $e_1=[ 1, 0, \dots, 0]^T\in\Rr^k$.
This expression for $y_k(t)$ can be derived by 
projecting the IVP~\eqref{ivp} onto the Krylov
subspace $\Kk_k(A,g)=\mathrm{colspan} (V_k)$
in the Galerkin sense.
Indeed, let us define
a residual $r_k(t)$ of $y_k(t)$ with respect to~\eqref{ivp}, i.e.,
let 
\begin{equation}
\label{res}
r_k(t)\equiv -Ay_k(t)+g - y_k'(t).
\end{equation}
Now consider
$$
y_k(t)\approx y(t), \qquad y_k(t)=V_ku(t)
$$
such that its residual 
is kept orthogonal to the Krylov subspace: $V_k^Tr_k(t)=0$.
Since $g=\beta V_ke_1$, this
orthogonality condition leads to a projected IVP in
$u(t): \Rr\rightarrow\Rr^k$
\begin{equation}
\label{ivp1}
u'(t) = -H_k u(t) + \beta e_1,\qquad u(0) = 0,
\end{equation}
where $u(0)=0$ follows from the assumption $y(0)=0$.
With $u(t)=t \varphi(-tH_k) (\beta e_1)$,
we see that $V_ku(t)$ yields \eqref{yk}.

Now it is easy to observe that
the residual~\eqref{res} is readily
computable in the course of the Arnoldi process:
\begin{equation}
\label{rk}
\begin{aligned}
  r_k(t) &\stackrel{\eqref{yk}}  {=} -AV_ku(t)+g - V_ku'(t)
          \stackrel{\eqref{ivp1}}{=} -AV_ku(t)+g- V_k(-H_k y(t) + \beta e_1)\\
          &= -AV_ku(t) + g + V_kH_ku(t) - g
          \stackrel{\eqref{Arn}}{=} -h_{k+1,k}v_{k+1}e_k^Tu(t),
\end{aligned}
\end{equation}
where the bracketed numbers above the equality signs
indicate the relations used in the derivation.

In case the initial value vector $v$ in~\eqref{ivp} is not zero,
we introduce a function $\tilde{y}(t)=y(t)-v$ which satisfies an IVP
$$
\tilde{y}'(t)=-A\tilde{y}(t) + \tilde{g},
\quad \tilde{y}(0)=0,\quad t\geqs 0,
$$
with $\tilde{g}=g-Av$. Using~\eqref{phim} for this transformed problem
we see
that $\tilde{y}(t) = t\varphi(-tA)\tilde{g}$ and, thus, a Krylov
subspace approximation $\tilde{y}_k(t)$ to $\tilde{y}(t)$,
as well as its residual $\tilde{r}_k(t)$ (cf.~\eqref{res}),
can be computed as discussed above.  Our Krylov subspace
approximation to $y(t)$ is then $y_k(t)=\tilde{y}_k(t)+v$.
For this approximation we introduce the residual $r_k(t)$ in the same way as
for the case $v=0$: see~\eqref{res}.  Then we have
$$
r_k(t) = -Ay_k(t)+g-y_k'(t) = -A\tilde{y}_k(t)-Av+g-y_k'(t)
= -A\tilde{y}_k(t)+\tilde{g}-\tilde{y}_k'(t)=\tilde{r}_k(t),
$$
i.e., the residual $r_k(t)$ coincides with the
residual $\tilde{r}_k(t)$ for the transformed solution
$\tilde{y}_k(t)$ and, hence, is also easily computable.

Note that the error $\epsi_k(t)\equiv y(t)-y_k(t)$ of
the Krylov approximation $y_k(t)$ satisfies an IVP
\begin{equation}
  \label{err}
  \epsi_k'(t) = -A \epsi_k(t) + r_k(t), \quad \epsi_k(0)=0.
\end{equation}
This shows that the introduced residual can be seen as the backward
error for the approximate solution $y_k(t)$.  This is similar to the well known
property of the linear system residual:
if $\tilde{x}$ approximates exact solution $x$ of a linear
system $Ax=b$ then for its residual $\tilde{r}=b-A\tilde{x}$ 
we have $A(x-\tilde{x})=\tilde{r}$.
Using~\eqref{posdef},\eqref{err} we can
bound the error norm as \cite[Lemma~4.1]{BGH13}
\begin{equation}
  \label{err_bnd}
  \|\epsi_k(t)\|\leqs t\varphi(-t\omega)\max_{s\in[0,t]}\|r_k(s)\|,
\end{equation}
where $\omega\geqs 0$ is a constant introduced below in~\eqref{omega}.
Hence, any residual estimates, including obtained below in Section~\ref{s:Faber}
can be regarded as error estimates as well. 
This backward error property is also shared by the matrix exponential
residual, see e.g.~\cite{BGH13}.

\subsection{$\varphi$ via the matrix exponential, two approaches}
If $A$ is nonsingular, a naive approach to compute $y:=\varphi(-A)g$
for a given $g$ would be to solve $v$ from $Av=-g$ and evaluate
$y:=\exp(-A)v - v$.  This means that we replace~\eqref{ivp}
by an equivalent IVP with the initial value $v$
and a zero source term.
Comparing this approach with Krylov subspace evaluation of $y:=\varphi(-A)g$
shows that both Krylov subspace methods (for
$y:=\exp(-A)v - v$ and for $y:=\varphi(-A)g$) require approximately
the same number of Krylov steps.  However, the additional computational costs
for solving $Av=-g$ are not paid off.  Hence, unless the systems with $A$
can be solved at a negligible cost, computing actions
of $\varphi$ by the matrix exponential actions appears to be inefficient.
Of course, this approach can not be used for a general $g$ in case
$A$ is singular.

We remark that quite a different
and successful approach to compute the $\varphi$ matrix function via
a matrix exponential is presented in~\cite[Proposition~2.1]{Saad92}
(see also \cite{AlmohyHigham2011}).
For computing $\varphi(-tA)v$, $A\in\Rrnn$, it amounts to evaluating
the last column of $\exp(-t\widehat{A})$, where $\widehat{A}\in\Rr^{(n+1)\times(n+1)}$
is a matrix obtained by augmenting $A$ with $v$ as the last column and
zeros as the last row.  In the context of large scale problems
and Krylov subspace methods this approach has
a drawback that eventual (skew) symmetry of~$A$ is lost in~$\widehat{A}$.
We use this approach for the evaluating $u(t)=t \varphi(-tH_k) (\beta e_1)$
to solve the small scale projected IVP~\eqref{ivp1}, see Section~\ref{s:num}.
   
\section{A restarting procedure for $\varphi$ Krylov subspace evaluation}
\label{s:rst}
\subsection{The residual norm as a function of time}
\noindent
For real matrices $A$ it is not difficult to see that
property~\eqref{posdef} is equivalent to
$$
x^TAx\geqs 0, \quad \forall x\in\Rr^n,
$$
which means that the symmetric part $\frac12(A+A^T)$ of $A$ is nonnegative
definite.
Denote
\begin{equation}
\label{omega}
\omega\equiv \min_{0\ne x\in\Rr^n}\frac{x^TAx}{x^Tx}\geqs 0.  
\end{equation}
Then it holds (see, e.g.,~\cite[Section~I.2.3]{HundsdorferVerwer:book})
\begin{equation}
\label{exp_est}
\|\exp(-tA)\|\leqs e^{-t\omega},\qquad \forall t\geqs 0.  
\end{equation}
A similar estimate holds for the matrix $H_k=V_k^TAV_k$ produced by the Arnoldi process
(see~\eqref{Arn}).  Indeed, let 
\begin{equation}
\label{omegak}
\omega_k\equiv\min_{0\ne y\in\Rr^k}\frac{y^TH_ky}{y^Ty}.
\end{equation}
Then
$$
\omega_k=
\min_{0\ne y\in\Rr^k}\frac{(V_ky)^T A (V_ky)}{y^Ty}
\geqs\omega\geqs 0,  
$$
so that, similarly to~\eqref{exp_est},
\begin{equation}
\label{exp_estk}
\|\exp(-tH_k)\|\leqs e^{-t\omega_k},\qquad \forall t\geqs 0.  
\end{equation}
Exploiting the estimate~\cite[Proof of Lemma~2.4]{HochbruckOstermann2010}
$$
\|\varphi(-tA)\|\leqs\int_0^1 \|\exp(-t(1-\theta)A)\|\,d\theta,
$$
bounding the norm under the integral
by $e^{-t(1-\theta)\omega}$ and evaluating the integral, we obtain
a useful inequality
\begin{equation}
\label{phi_est}
\|\varphi(-tA)\|\leqs\varphi(-t\omega).  
\end{equation}
Then a similar relation holds also for $H_k$:
\begin{equation}
\label{phi_estk}
\|\varphi(-tH_k)\|\leqs\varphi(-t\omega_k).  
\end{equation}

\begin{proposition}
\label{prop1}
Let $t>0$, $g\in\Rr^n$ and let $A\in\Rr^{n\times n}$ be such
that~\eqref{posdef} holds.  If $y_k(t)$ is the Krylov subspace
approximation~\eqref{yk} to the vector $y(t)=t\varphi(-tA)g$ then
for its residual $r_k(t)$, defined by~\eqref{res}, \eqref{rk},
holds
\begin{equation}
  \label{rk_est}
  \|r_k(t)\|\leqs \beta t h_{k+1,k}\varphi(-t\omega_k)=
  \begin{cases}
    \dfrac{\beta}{\omega_k}h_{k+1,k}(1-e^{-t\omega_k}),\quad&\text{for }\omega_k>0,
    \\
    \beta t h_{k+1,k},\quad&\text{for }\omega_k=0,
  \end{cases}
\end{equation}
where $\omega_k$ is defined by~\eqref{omegak}.
\end{proposition}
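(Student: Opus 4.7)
The plan is to chain together the already-derived residual formula \eqref{rk} with the operator norm bound \eqref{phi_estk}, and then just evaluate $\beta t h_{k+1,k}\varphi(-t\omega_k)$ in closed form for the two cases $\omega_k>0$ and $\omega_k=0$. Nothing new has to be invented; the result is essentially a bookkeeping corollary of the two ingredients the section has built.

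First I would start from the identity $r_k(t)=-h_{k+1,k}v_{k+1}e_k^{T}u(t)$ derived in \eqref{rk}, where $u(t)=t\varphi(-tH_k)(\beta e_1)$. Taking 2-norms and using that $v_{k+1}$ has unit length gives $\|r_k(t)\|=h_{k+1,k}|e_k^{T}u(t)|\leqs h_{k+1,k}\|u(t)\|$. Pulling the scalars $t$ and $\beta$ out of $u(t)$ and using $\|e_1\|=1$ yields $\|u(t)\|\leqs t\beta\|\varphi(-tH_k)\|$. At this step I would invoke \eqref{phi_estk} to bound $\|\varphi(-tH_k)\|\leqs \varphi(-t\omega_k)$, which gives the first form of the estimate in \eqref{rk_est}, namely $\|r_k(t)\|\leqs \beta t h_{k+1,k}\varphi(-t\omega_k)$.

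The remaining step is to write this right-hand side in the piecewise form stated in \eqref{rk_est}. Using the definition \eqref{phi}, for $\omega_k>0$ I have
\begin{equation*}
t\varphi(-t\omega_k)=t\cdot\frac{e^{-t\omega_k}-1}{-t\omega_k}=\frac{1-e^{-t\omega_k}}{\omega_k},
\end{equation*}
so $\beta t h_{k+1,k}\varphi(-t\omega_k)=(\beta/\omega_k)h_{k+1,k}(1-e^{-t\omega_k})$, matching the first branch. For $\omega_k=0$, the extension $\varphi(0)=1$ fixed just after \eqref{phi} gives $\beta t h_{k+1,k}\varphi(0)=\beta t h_{k+1,k}$, matching the second branch. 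Note that $\omega_k\geqs \omega\geqs 0$ by the computation following \eqref{omegak}, so these are indeed the only cases.

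There is no real obstacle here; the only place that requires a moment of care is checking that the $\omega_k=0$ branch is the correct limit of the $\omega_k>0$ expression (a single L'H\^opital or Taylor expansion), which ensures the stated formula is consistent with the entire-function definition of $\varphi$. Everything else is a direct application of \eqref{rk} and \eqref{phi_estk} together with the sub-multiplicativity of the matrix 2-norm.
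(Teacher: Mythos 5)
Your proposal is correct and follows essentially the same route as the paper: bound $\|u(t)\|\leqs\beta t\|\varphi(-tH_k)\|\leqs\beta t\varphi(-t\omega_k)$ via \eqref{phi_estk}, combine with $\|r_k(t)\|\leqs h_{k+1,k}\|u(t)\|$ from \eqref{rk}, and evaluate the two cases from the definition of $\varphi$. The only detail the paper makes explicit that you leave implicit is that $h_{k+1,k}\geqs 0$ (guaranteed by the Arnoldi process), which justifies dropping the absolute value on that factor.
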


\begin{proof}
From~\eqref{ivp1} we have $u(t)=t \varphi(-tH_k) (\beta e_1)$, so that
$$
\|u(t)\|\leqs \beta t \|\varphi(-tH_k)\| \leqs \beta t \varphi(-t\omega_k)
=\begin{cases}
  \dfrac{\beta}{\omega_k} (1-e^{-t\omega_k}),\quad&\text{for }\omega_k>0,
  \\
  \beta t ,\quad&\text{for }\omega_k=0,
\end{cases}   
$$
where~\eqref{phi_estk} is used.
Relation~\eqref{rk} implies
$$
\|r_k(t)\| = h_{k+1,k}\, |e_k^Tu(t)|\leqs h_{k+1,k} \|u(t)\|,
$$
where $h_{k+1,k}\geqs 0$ is guaranteed by the Arnoldi process.
The last relation, together with the above estimate on $\|u(t)\|$, gives
the required bound~\eqref{rk_est}.
\end{proof}

Note that \eqref{rk_est} exhibits solely the time behavior
of the residual $r_k(t)$ as a function of time for a fixed $k$.
A more detailed analysis of the residual dependence on $k$ and
on $t$ is given below in Section~\ref{s:Faber}.

\subsection{Residual-time (RT) restarting procedure}
The method for evaluating the $\varphi$ matrix functions we propose in this paper
adopts the so-called residual-time (RT) restarting strategy. It is proposed
for the matrix exponential evaluations in our recent paper~\cite{ART}.
The RT strategy is essentially based on a notion of the matrix
exponential residual defined, for an approximation $w_k(t)\approx  w(t)=\exp(-tA)v$,
as~\cite{CelledoniMoret97,DruskinGreenbaumKnizhnerman98,BGH13}
$$
-Aw_k(t)-w_k'(t).
$$
The RT restarting is based on the observation that the residual is easily
computable and the residual norm tends
to be a monotonically increasing function of time, which explains the
name ``residual-time'' restarting.
The $\varphi$ residual~\eqref{res}
exhibits the same behavior (see relation~\eqref{rk} and Proposition~\ref{prop1} above).
Therefore the RT restarting approach can be readily adopted for the $\varphi$ evaluations.

The RT restarting approach is quite simple.
We solve~\eqref{ivp}
by performing $k=1,2,\dots,k_{\max}$
steps of the Arnoldi process (see Section~\ref{s:Phi_res}).
We stop as soon as the residual norm
$\max_{s\in[0,t]}\|r_k(s)\|$ is small enough.
If, after $k_{\max}$ carried out steps,
this is not the case we find a time moment $\delta$ such that
the residual norm $\|r_{k_{\max}}(s)\|$ is sufficiently small for $s\in[0,\delta]$.
Then we set in~\eqref{ivp} $v:=y_k(\delta)$, decrease the time interval
$t:=t-\delta$ and start the Arnoldi
process for this modified IVP again.  This RT restarting for the $\varphi$
matrix function evaluations is outlined in Figure~\ref{f:rst_alg}.

\begin{figure}
\centering{\texttt{\begin{minipage}{\linewidth}
\% Given: $A\in\Rrnn$, $g\ne 0$, $v\in\Rr^n$, $t>0$, $k_{\max}$ and $\tol>0$ \\
$\conv := \mathtt{false}$, $\tol:=\tol/t$\\
while not($\conv$)                                 \\\hspace*{2em}
    $\beta:= \|g-Av\|$, $v_1 := \frac1\beta(g-Av)$         \\\hspace*{2em}
    for $k=1,\dots,k_{\max}$                         \\\hspace*{4em}
         carry out step $k$ of Arnoldi process:     \\\hspace*{4em}
         compute $v_{k+1}$ and $h_{1,k}$, \dots, $h_{k+1,k}$ \\\hspace*{4em}
         if $\max_{s\in[0,t]}\|r_k(s)\|\leqs\tol$ then \hfill $(*)$      \\\hspace*{6em}
            $\conv:=\mathtt{true}$           \\\hspace*{6em}
            break (leave the for loop)          \\\hspace*{4em}
        elseif $k=k_{\max}$                                           \\\hspace*{6em}
            \% -------- restart after $k_{\max}$ steps                \\\hspace*{6em}
            find largest $\delta$ such that
            $\max_{s\in[0,\delta]}\|r_k(s)\|\leqs\tol$
               \hfill $(\circ)$ \\\hspace*{6em}
            $u :=\delta\varphi(-\delta H_k)e_1$            \\\hspace*{6em}
            $v := v + V_k(\beta u)$                                     \\\hspace*{6em}
            $t := t - \delta$                                     \\\hspace*{6em}
            break (leave the for loop)                                \\\hspace*{4em}
        end if                                                         \\\hspace*{2em} 
   end for                                                           \\
end while                                                              \\
$u :=t\varphi(-t H_k)e_1$, 
$y_k := v + V_k(\beta u)$
\end{minipage}}}
\caption{Description of the RT (residual--time) restarting algorithm
  for the $\varphi$ matrix function evaluations.
  The algorithm computes Krylov subspace approximation $y_k(t)\approx v+\varphi(-tA)(g-Av)$
  such that for its residual $r_k(t)$ holds $\max_{s\in[0,t]}\|r_k(s)\|\leqslant\tol$.}
\label{f:rst_alg}
\end{figure}

To monitor the values of the residual norm in the algorithm
(see the algorithm lines marked with $(*)$, $(\circ)$), we
compute the exact solution $u(s)$ of the projected IVP~\eqref{ivp1}
at time moments $s=\Delta t$, $2\Delta t$, $3\Delta t$, \dots.  This
is done by recursion
\begin{equation}
\label{u(s)}
\begin{aligned}
  u(s+\Delta t) &= E_ku(s) + \Delta t d_k,
  \\
   & E_k=\exp(-\Delta t H_k),\quad d_k=\beta\varphi(-\Delta t H_k)e_1,
\end{aligned}
\end{equation}
where the matrix $E_k$ and vector $d_k$ are computed once.
The value of $\Delta t$ is chosen in lines~$(*)$ and~$(\circ)$
differently:
\begin{equation}
\label{Dt}  
\Delta t :=
\begin{cases}
  t/6, \quad & \text{to estimate }\displaystyle\max_{s\in[0,t]}\|r_k(s)\|,
  \text{ line }(*),
  \\
  \dfrac{t}{2^p100}, \quad & \text{with smallest }p=0,1,\dots
  \text{ such}
  \\
  &\text{that }\|r_k(\Delta t)\|\leqs\tol,
  \text{ line }(\circ).
\end{cases}
\end{equation}
For estimating $\max_{s\in[0,t]}\|r_k(s)\|$ in line~$(*)$ of the algorithm,
$\Delta t$ does not have to be very small.  
This is because $\|r_k(s)\|$ usually achieves its maximum at the end point $s=t$
and the value $\|r_k(t)\|$ determines the quality of the approximation
$y_k(t)\approx y(t)$.
We emphasize that we compute the values of $u(s)$ exactly, and the computation
accuracy does not depend on $\Delta t$ as is the case in a regular
time integration scheme.

To determine the restart interval length $\delta$
(line~$(\circ)$ in Figure~\ref{f:rst_alg}),
we proceed according to formula~\eqref{Dt}.
More specifically, 
$\Delta t$ is initially set to $t/100$.
If $\|r_k(\Delta t)\|\leqs\tol$,
we trace the values $\|r_k(\Delta t)\|$,
$\|r_k(2\Delta t)\|$, \dots
by recursion~\eqref{u(s)}, 
until they exceed the tolerance value.
Then we set $\delta$ to the largest $p\Delta t$
for which $\|r_k(p\Delta t)\|\leqs\tol$.
Otherwise, if $\|r_k(\Delta t)\|>\tol$
for $\Delta t=t/100$, we keep on halving $\Delta t$ until
$\|r_k(\Delta t)\|\leqs\tol$ and then set $\delta:=\Delta t$.

\section{Estimates on the residual in terms of the Faber series}
\label{s:Faber}
For simplification of formulae we assume in this section that $\beta=1$.

Faber series as a means to investigate convergence of Arnoldi's method were introduced in \cite{Knizh91}; see also \cite{BR09}.

\subsection{A general estimate}
Denote by $W(A)$ be the numerical range of matrix $A$ and let
$\Phi_j$ be the Faber polynomials~\cite{Suetin} for the compact $W(A)$.
Since, for a fixed $t$, $\varphi(-tz)$ is an entire function of $z$,
we can consider the Faber series decomposition
\be \label{app3}
f(z;t) \equiv t\varphi(-tz) 
= \frac{1-e^{-tz}}{z} = \sum_{j=0}^{+\infty}f_j(t)\Phi_j(z), 
\qquad z\in W(A), \quad t\geqs0,
\ee
where $t$ is considered as a parameter. 

The following assertion is an analogue of \cite[Proposition~1]{BoKn2020}. 
\begin{proposition} \label{P1}
The residual (\ref{res}) satisfies the inequality
\be \label{appr2}
\|r_k(t)\|\leqs 2h_{k+1,k}\sum_{j=k-1}^{+\infty}|f_j(t)|
\leqs 2\|A\|\sum_{j=k-1}^{+\infty}|f_j(t)|.
\ee
\end{proposition}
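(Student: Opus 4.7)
The plan is to combine the explicit residual formula~\eqref{rk} with the Faber expansion~\eqref{app3} of $f(z;t)=t\varphi(-tz)$ and then exploit the upper Hessenberg structure of $H_k$ to truncate the series. Starting from~\eqref{rk} with $\beta=1$, one has
$$
\|r_k(t)\| = h_{k+1,k}\,|e_k^T u(t)|,\qquad u(t)=f(H_k;t)\,e_1,
$$
and substituting~\eqref{app3} applied to $H_k$ yields
$$
e_k^T u(t) \;=\; \sum_{j=0}^{+\infty} f_j(t)\,e_k^T\Phi_j(H_k)\,e_1.
$$
The exchange of sum and inner product is legitimate because $f(\cdot;t)$ is entire, so the Faber series converges on the spectrum of~$H_k$.

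Second, because $H_k$ is upper Hessenberg, a straightforward induction gives $H_k^i e_1\in\spanl\{e_1,\dots,e_{i+1}\}$ for every $i\geqs 0$, and hence $\Phi_j(H_k)e_1\in\spanl\{e_1,\dots,e_{j+1}\}$ for all $j\geqs 0$. Whenever $j\leqs k-2$ this subspace is orthogonal to $e_k$, so the corresponding terms in the series vanish identically. Only the tail $j\geqs k-1$ survives, and each surviving term obeys $|e_k^T\Phi_j(H_k)e_1|\leqs\|\Phi_j(H_k)\|$.

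Third, I would invoke the Beckermann-type bound $\|\Phi_j(B)\|\leqs 2$ valid for any square matrix $B$ whose numerical range is contained in the (convex) compact set underlying the Faber polynomials. Since $H_k=V_k^TAV_k$ with $V_k$ having orthonormal columns, $W(H_k)\subseteq W(A)$ and the bound applies to $B=H_k$, delivering the first inequality in~\eqref{appr2}. The second inequality follows from the trivial estimate $h_{k+1,k} = \|Av_k-V_kH_ke_k\| \leqs \|Av_k\| \leqs \|A\|$, which is an immediate consequence of the Arnoldi decomposition~\eqref{Arn}. The main obstacle is the non-elementary Faber polynomial bound $\|\Phi_j(H_k)\|\leqs 2$: this is the nontrivial analytic input (relying on $W(A)$ being convex by the Toeplitz--Hausdorff theorem) that makes Faber polynomials of the numerical range the natural tool. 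Once this is accepted, the rest is a short bookkeeping calculation around the Hessenberg truncation, exactly as in~\cite{BoKn2020}.
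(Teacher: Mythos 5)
Your proof is correct and follows essentially the same route as the paper: expand $f(H_k;t)e_1$ in the Faber series, kill the terms with $j\leqs k-2$ via $\deg\Phi_j=j$ and the Hessenberg structure, and bound the tail with Beckermann's estimate $\|\Phi_j(H_k)\|\leqs 2$ together with $h_{k+1,k}\leqs\|A\|$. The only difference is a harmless streamlining: you invoke the already-established residual formula~\eqref{rk} directly, whereas the paper re-derives it inside the Faber framework by differentiating the series and using the identity~\eqref{app1}.
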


\begin{proof}
The superexponential convergence in $j$ and the smoothness of $f_j(t)$ in $t$ enable one to differentiate series (\ref{app3}) in $t$.
Decomposition (\ref{app3}) implies
that the approximant (\ref{yk})
can be represented 
as
\be \label{app6}
y_k(t)=V_kf(H_k;t) e_1=V_k\sum_{j=0}^{+\infty} f_j(t)\Phi_j(H_k) e_1.
\ee
From~\eqref{app3} we see that
\begin{equation}
\label{etz}
e^{-tz}=1-zf(z;t)=1-\sum_{j=0}^{+\infty}f_j(t)z\Phi_j(z).  
\end{equation}
Therefore, since
$\frac{\partial e^{-tz}}{\partial t} = -ze^{-tz}$,
we obtain, differentiating~\eqref{etz},
\bea \label{app1}
-z e^{-tz} = \frac{\partial}{\partial t}\left[1-\sum_{j=0}^{+\infty}f_j(t)z\Phi_j(z)
\right],\nonumber\\
-z\left[1-\sum_{j=0}^{+\infty}f_j(t)z\Phi_j(z)\right]
=-\sum_{j=0}^{+\infty}f_j^\prime(t)z\Phi_j(z), \nonumber\\
1-\sum_{j=0}^{+\infty}f_j(t)z\Phi_j(z)
=\sum_{j=0}^{+\infty}f_j^\prime(t)\Phi_j(z), \nonumber\\
-1+\sum_{j=0}^{+\infty}\left[f_j^\prime(t)+zf_j(t)\right]\Phi_j(z) = 0.
\eea
Exploiting equality $\deg\Phi_j=j$ and relation (\ref{res}), (\ref{app6}),
(\ref{Arn}), (\ref{app1}) with $H_k$ substituted for $z$, we get
\beas
r_k(t) = -\left(V_kH_k+h_{k+1,k}v_{k+1}e_k^\tT\right)
\sum_{j=0}^{+\infty}f_j(t)\Phi_j(H_k) e_1 + g
-V_k\sum_{j=0}^{+\infty}f_j^\prime(t)\Phi_j(H_k) e_1 \\
=-V_k\sum_{j=0}^{+\infty}\left[f_j^\prime(t)\mathcal{I}_k+H_kf_j(t)\right]
\Phi_j(H_k) e_1 + V_k e_1
-h_{k+1,k}v_{k+1}e_k^\tT\sum_{j=0}^{+\infty}f_j(t)\Phi_j(H_k) e_1 \\
=-h_{k+1,k}v_{k+1}e_k^\tT\sum_{j=k-1}^{+\infty}f_j(t)\Phi_j(H_k) e_1,
\eeas
where $\mathcal{I}_k$ denotes the $k\times k$ identity matrix.
The bound $\|\Phi_j(H_k)\|\leqs2$ (see \cite[Theorem~1]{B05})
now implies (\ref{appr2}).
\end{proof}

\begin{remark}
Comparing estimate (\ref{appr2}) and the estimate on the error
\[
\|y(t)-y_k(t)\| \leqs 4\sum_{j=k}^{+\infty}|f_j(t)|
\]
(see \cite[Theorem~3.2]{BR09})
shows that the two upper bounds mainly differ in coefficients. Thus, we can conjecture that the error and the residual behave similarly to each other.
\end{remark}

\subsection{The symmetric case}

We assume here that $A^T=A$ and the spectral interval of $A$ is contained in the segment $[0,2c]$, $c>0$. The change of variable $x=(z-c)/c$ translates $f(z;t)$ into the function
\[
g(x;t) \equiv \frac{1-e^{-ct}e^{-ctx}}{c(x+1)},
\]
and we wish to estimate its Chebyshev coefficients in $x$, because in this case the Faber series essentially reduces to the Chebyshev series up to a linear variable scaling.
Following \cite{Pash}, we denote by $a_m[h]$ the $m$th Chebyshev coefficient of a function $h$ defined for $x\in[-1,1]$ as
\[
h(x) = \sum_{m=0}^{+\infty}{\!}' a_m[h]T_m(x),
\]
where $T_m$ is the $m$th first kind Chebyshev polynomial and the prime ${}\prime$ means that the term at $m=0$ is divided by two.

\begin{lemma} \label{L1}
For $k\geqs2$ the inequality
\be \label{resL1}
\sum_{m=k-1}^{+\infty} \left| a_m\left[
\frac{1-e^{-ct}e^{-ctx}}{c(x+1)} \right] \right|
\leqs \frac{2e^{-ct}}{c} \sum_{\nu=k}^{+\infty}
(\nu-k+2)(\nu-k+1)I_\nu(ct).
\ee
holds with Bessel functions $I_\nu$.
\end{lemma}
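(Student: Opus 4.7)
The plan is to compute the Chebyshev coefficients of $g(\cdot;t)$ in closed form and then analyse their tail sum via a three-term difference equation inherited from the Bessel recurrence.

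First I would use the integral representation $\varphi(w)=\int_0^1 e^{ws}\,ds$ to write
\[
g(x;t)=t\int_0^1 e^{-cts}\,e^{-ctsx}\,ds,
\]
insert the classical expansion $e^{\alpha x}=\sum_{m\geqs 0}{\!}'\,2I_m(\alpha)T_m(x)$, swap summation and integration (justified by uniform convergence), and change variable via $\tau=cts$. This yields the closed form
\[
a_m\!\left[\frac{1-e^{-ct}e^{-ctx}}{c(x+1)}\right]
=\frac{2(-1)^m}{c}\int_0^{ct}e^{-\tau}I_m(\tau)\,d\tau=\frac{2(-1)^m}{c}J_m,
\]
so that $|a_m|=\tfrac{2}{c}J_m$, since $I_m\geqs 0$ on $[0,ct]$.

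Next I would use the Bessel identity $2I'_m(\tau)=I_{m-1}(\tau)+I_{m+1}(\tau)$ together with $I_m(0)=0$ for $m\geqs 1$. Integration by parts in $J_m$ converts these into the three-term recurrence
\[
J_{m-1}-2J_m+J_{m+1}=2e^{-ct}I_m(ct),\qquad m\geqs 1.
\]
The decay $J_m\to 0$ as $m\to\infty$ (forced by $I_m(\tau)=O((\tau/2)^m/m!)$) pins down the unique solution, which direct substitution shows to be
\[
J_m=2e^{-ct}\sum_{n=m+1}^{\infty}(n-m)I_n(ct),\qquad m\geqs 0,
\]
with the $m=0$ case cross-checked against the antiderivative $\int_0^{x}e^{-\tau}I_0(\tau)\,d\tau=xe^{-x}(I_0(x)+I_1(x))$.

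Finally I would sum $|a_m|=\tfrac{2}{c}J_m$ over $m\geqs k-1$, swap the double sum (Fubini for nonnegative terms), and observe that for each $n\geqs k$ the inner range $m=k-1,\ldots,n-1$ contributes $\sum_{m}(n-m)=\tfrac12(n-k+1)(n-k+2)$. This produces exactly the bound in \eqref{resL1}. The delicate point is pinning down the correct decaying solution of the difference equation: since the recurrence holds only for $m\geqs 1$, the value at $m=0$ must be verified separately, and the linearly growing homogeneous solutions ruled out by the decay at infinity. All interchanges of limits are routine given the super-exponential decay of $I_n$ in $n$.
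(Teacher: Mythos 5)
Your proof is correct, and it arrives at exactly the same closed form for the Chebyshev coefficients,
$a_m\left[\frac{1-e^{-ct}e^{-ctx}}{c(x+1)}\right]=\frac{4(-1)^m e^{-ct}}{c}\sum_{n=m+1}^{\infty}(n-m)I_n(ct)$,
as the paper does, but by a genuinely different route. The paper first reads off the coefficients of the numerator $\frac{1-e^{-ct}e^{-ctx}}{c}$ from the standard expansion of $e^{-ctx}$ in Bessel functions and then invokes Paszkowski's division theorem, which expresses the Chebyshev coefficients of $h(x)/(x+1)$ as a binomial-weighted alternating tail sum of the coefficients of $h$. You instead write $t\varphi(-tz)=t\int_0^1 e^{-tzs}\,ds$, integrate the expansion of $e^{-ctsx}$ term by term to get $a_m=\frac{2(-1)^m}{c}J_m$ with $J_m=\int_0^{ct}e^{-\tau}I_m(\tau)\,d\tau$, and evaluate $J_m$ by showing it solves $J_{m-1}-2J_m+J_{m+1}=2e^{-ct}I_m(ct)$ for $m\geqs1$ (via $2I_m'=I_{m-1}+I_{m+1}$ and $I_m(0)=0$) and is the unique solution decaying at infinity; both routes then finish identically with the nonnegative double sum and $\sum_{m=k-1}^{n-1}(n-m)=\frac12(n-k+1)(n-k+2)$. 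Your derivation is self-contained, replacing the citation of Paszkowski's theorem by elementary Bessel identities, at the modest cost of the uniqueness argument for the difference equation; note in that regard that the decay condition alone already forces both constants in the homogeneous solution $a+bm$ (valid for all $m\geqs0$ since the recurrence couples $D_0,D_1,D_2,\dots$) to vanish, so the separate $m=0$ cross-check against $\int_0^{x}e^{-\tau}I_0(\tau)\,d\tau=xe^{-x}(I_0(x)+I_1(x))$, while reassuring, is not actually needed. A cosmetic point: the step producing the recurrence is the fundamental theorem of calculus applied to $\frac{d}{d\tau}\left[e^{-\tau}I_m(\tau)\right]=e^{-\tau}\left(I_m'(\tau)-I_m(\tau)\right)$ rather than an integration by parts, but the computation is the same, and since all terms involved are nonnegative your argument in fact shows that \eqref{resL1} holds with equality, as does the paper's.
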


\begin{proof}
With formulae \cite[Ch.~II, \S~11, (10), (54)]{Pash}, we have
\[
e^{-ctx} = 2\sum_{j=0}^{+\infty}{\!}' I_j(-ct)T_j(x)
= 2\sum_{j=0}^{+\infty}{\!}' (-1)^jI_j(ct)T_j(x).
\]
Then, with $\delta_{i,j}$ denoting the Kronecker delta, we obtain
\[
a_m\left[\frac{1-e^{-ct}e^{-ctx}}{c}\right]
= \frac{2\delta_{m,0}-e^{-ct}a_m[e^{-ctx}]}{c}
= \frac{2\delta_{m,0}-2(-1)^me^{-ct}I_m(ct)}{c},
\]
which in view of \cite[Ch.~II, \S~9, Theorem~9.4, (42)]{Pash} gives
\beas
a_m\left[\frac{1-e^{-ct}e^{-ctx}}{c(x+1)}\right]
=2 \sum_{l=0}^{+\infty} (-1)^l {l+1 \choose 1} a_{m+l+1}
\left[\frac{1-e^{-ct}e^{-ctx}}{c}\right] \\
=\frac{4}{c} \sum_{l=0}^{+\infty} (-1)^l(l+1)
[\delta_{m+l+1,0}-e^{-ct}(-1)^{m+l+1}I_{m+l+1}(ct)] \\
=\frac{4}{c} (-1)^m \sum_{l=0}^{+\infty} (l+1)
e^{-ct}I_{m+l+1}(ct),
\eeas
whence
\beas
\sum_{m=k-1}^{+\infty} \left|a_m\left[ 
\frac{1-e^{-ct}e^{-ctx}}{c(x+1)} \right]\right|
\leqs \frac{4e^{-ct}}{c} \sum_{m=k-1}^{+\infty} \sum_{l=0}^{+\infty} (l+1) I_{m+l+1}(ct) \\
= \frac{4e^{-ct}}{c} \sum_{\nu=k}^{+\infty}
\left[\sum_{l=0}^{\nu-k}(l+1)\right] I_\nu(ct) 
= \frac{4e^{-ct}}{c} \sum_{\nu=k}^{+\infty}
\frac{(\nu-k+2)(\nu-k+1)}{2}I_\nu(ct) \\
= \frac{2e^{-ct}}{c} \sum_{\nu=k}^{+\infty}
(\nu-k+2)(\nu-k+1)I_\nu(ct).
\eeas
\end{proof}

\begin{proposition}[A general error bound for the symmetric case] \label{P2}
For $k\geqs2$
\be \label{resP2}
\|r_k(t)\| \leqs 4e^{-ct} \sum_{\nu=k}^{+\infty}
(\nu-k+2)(\nu-k+1)I_\nu(ct).
\ee
\end{proposition}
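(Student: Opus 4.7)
The plan is to combine Proposition~\ref{P1} with Lemma~\ref{L1} via the standard identification of Faber polynomials with Chebyshev polynomials on a real interval. For the compact $W(A)=[0,2c]$ (since $A$ is symmetric with spectrum contained there), the Faber polynomials are $\Phi_0(z)=1$ and $\Phi_j(z)=2T_j\bigl((z-c)/c\bigr)$ for $j\geqs 1$, so the Faber expansion~\eqref{app3} of $f(z;t)=t\varphi(-tz)$ on this interval coincides, after the substitution $x=(z-c)/c$, with the Chebyshev expansion of $g(x;t)=(1-e^{-ct}e^{-ctx})/(c(x+1))$. Writing out both expansions, one reads off $f_j(t)=\tfrac{1}{2}a_j[g]$ for all $j\geqs 1$.

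Next I would invoke Proposition~\ref{P1}, which yields
\[
\|r_k(t)\|\leqs 2\|A\|\sum_{j=k-1}^{+\infty}|f_j(t)|.
\]
Since $k\geqs 2$, the summation index satisfies $j\geqs k-1\geqs 1$, so the coefficient correspondence above applies term by term, giving
\[
\sum_{j=k-1}^{+\infty}|f_j(t)|=\tfrac{1}{2}\sum_{m=k-1}^{+\infty}|a_m[g]|.
\]
Because $A$ is symmetric with spectrum in $[0,2c]$, $\|A\|\leqs 2c$. Plugging in Lemma~\ref{L1},
\[
\|r_k(t)\|\leqs 2\cdot 2c\cdot\tfrac{1}{2}\cdot\frac{2e^{-ct}}{c}\sum_{\nu=k}^{+\infty}(\nu-k+2)(\nu-k+1)I_\nu(ct)=4e^{-ct}\sum_{\nu=k}^{+\infty}(\nu-k+2)(\nu-k+1)I_\nu(ct),
\]
which is exactly~\eqref{resP2}.

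The proof is essentially a bookkeeping exercise once the three inputs are lined up. The only point requiring care is the factor-$2$ normalization between the Faber polynomials on $[0,2c]$ and the Chebyshev polynomials on $[-1,1]$ (together with the corresponding Faber/Chebyshev coefficients), because an error there would propagate into the final constant; I would verify it against the standard reference \cite{Suetin}. Everything else—bounding $\|A\|$ by the spectral radius and substituting Lemma~\ref{L1} into the Proposition~\ref{P1} estimate—is immediate. Note that the restriction $k\geqs 2$ comes precisely from needing $k-1\geqs 1$ so that the Faber--Chebyshev identification avoids the exceptional $j=0$ term.
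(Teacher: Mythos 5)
Your proof is correct and follows essentially the same route as the paper's: apply Proposition~\ref{P1} with $\|A\|\leqs 2c$, convert the Faber coefficients on $[0,2c]$ to Chebyshev coefficients via $\Phi_j = 2T_j$ for $j\geqs 1$, and substitute Lemma~\ref{L1}; the constants combine identically to give $4e^{-ct}$. Your explicit remark that $k\geqs 2$ is needed so the summation avoids the exceptional $j=0$ Faber term is a correct clarification of a point the paper leaves implicit.
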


\begin{proof}
Gathering (\ref{appr2}), the definition of $c$, (\ref{resL1}) and the fact that $\Phi_{[-1,1],\nu}=2T_\nu$, $\nu\geqs1$ 
(see \cite[Ch.~II, \S~1, (21)]{Suetin}), derive
\beas
\|r_k(t)\| \leqs 4c \sum_{j=k-1}^{+\infty} |f_j(t)|
=2c \sum_{j=k-1}^{+\infty} \left|a_j\left[ 
\frac{1-e^{-ct}e^{-ctx}}{c(x+1)} \right]\right| \\
\leqs 4e^{-ct} \sum_{\nu=k}^{+\infty}
(\nu-k+2)(\nu-k+1)I_\nu(ct).
\eeas
\end{proof}

A similar use of Bessel functions $I_k$ can be found in \cite[\S~4]{DruskinKnizh89}. Here we have the term $(\nu-k+2)(\nu-k+1)$ as a complication.
Note that the upper bound~\eqref{resP2} can be easily computed with a high
accuracy by truncating the infinite sum $\sum_{\nu=k}^{+\infty}$
to $\sum_{\nu=k}^{M}$ where $M$ can be estimated as discussed
in~\cite[\S~4]{DruskinKnizh89}.  Nevertheless, as Proposition~\ref{P3}
below shows, an explicit bound on the residual norm can be obtained
from~\eqref{resP2}.

\begin{lemma}
Let $\mu\in\mathbb{N}$ and $w\geqs0$. Then the Bessel function $I_\mu$ obeys the inequality
\be \label{resL2}
e^{-w}I_\mu(w) \leqs \frac{w^\mu}{(2\mu-1)!!}.
\ee
\end{lemma}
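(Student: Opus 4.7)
The plan is to prove the bound via Poisson's integral representation of the modified Bessel function, together with a single elementary inequality at the end.

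First, I would invoke the classical Poisson representation
$$
I_\mu(w) = \frac{(w/2)^\mu}{\sqrt{\pi}\,\Gamma(\mu+\tfrac12)}
\int_{-1}^{1} e^{wt}\,(1-t^2)^{\mu-1/2}\,dt,
$$
valid for $\mu\in\mathbb{N}$ (and much more generally), where the integrand is integrable on $[-1,1]$. Multiplying both sides by $e^{-w}$ absorbs the exponential prefactor into the integrand, producing the factor $e^{w(t-1)}$. For $w\geqs 0$ and $t\in[-1,1]$ we have $e^{w(t-1)}\leqs 1$, so dropping this factor yields
$$
e^{-w} I_\mu(w) \leqs \frac{(w/2)^\mu}{\sqrt{\pi}\,\Gamma(\mu+\tfrac12)}
\int_{-1}^{1}(1-t^2)^{\mu-1/2}\,dt.
$$

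Next I would evaluate the remaining integral as a Beta function. The substitution $u=t^2$ (using the evenness of the integrand) gives
$$
\int_{-1}^{1}(1-t^2)^{\mu-1/2}\,dt
=B(\tfrac12,\mu+\tfrac12)
=\frac{\sqrt{\pi}\,\Gamma(\mu+\tfrac12)}{\mu!}.
$$
Substituting this back and cancelling $\sqrt{\pi}\,\Gamma(\mu+\tfrac12)$ reduces the bound to
$$
e^{-w} I_\mu(w) \leqs \frac{w^\mu}{2^\mu\mu!}=\frac{w^\mu}{(2\mu)!!}.
$$

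The final step is the elementary pairwise comparison $(2\mu-1)!!=\prod_{j=1}^{\mu}(2j-1)\leqs\prod_{j=1}^{\mu}2j=(2\mu)!!$. Inverting this inequality gives $\frac{1}{(2\mu)!!}\leqs\frac{1}{(2\mu-1)!!}$, which combined with the previous display yields exactly~(\ref{resL2}). I do not anticipate a real obstacle: the only mild care needed is to make sure the integrand is integrable (which is immediate for $\mu\geqs 1$, the range in which the lemma is subsequently applied), and to recall the factorial identity $(2\mu)!=(2\mu)!!(2\mu-1)!!$ if one wishes to present the last step in a still more compact form.
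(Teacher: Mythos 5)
Your proof is correct and follows essentially the same route as the paper's: both start from the Poisson integral representation of $I_\mu$ and bound the exponential factor by its maximum over the integration range. The only (cosmetic) difference is that the paper also majorizes the weight ($\sin^{2\mu}\theta\leqs 1$, giving the factor $\pi$ and then $(2\mu-1)!!$ via the duplication formula for $\Gamma(\mu+1/2)$), whereas you evaluate the remaining Beta integral exactly and land on the marginally sharper constant $(2\mu)!!=2^\mu\mu!$, from which \eqref{resL2} follows by $(2\mu-1)!!\leqs(2\mu)!!$.
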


\begin{proof}
Using formulae \cite[9.6.18, 6.1.8, 6.1.12]{AbramowitzStegun}, derive
\beas
e^{-w}I_\mu(w) \leqs \frac{(w/2)^\mu}{\sqrt\pi\,\Gamma(\mu+1/2)}
\left| \Int_0^\pi e^{w\cos\theta}\sin^{2\mu}\theta\,d\theta \right| 
\leqs e^{-w}\frac{(w/2)^\mu}{\sqrt\pi\,\Gamma(\mu+1/2)} \,\pi e^w \\
=\frac{\sqrt\pi w^\mu}{2^\mu\cdot\frac{(2\mu-1)!!}{2^\mu}\cdot\sqrt\pi} 
=\frac{w^\mu}{(2\mu-1)!!}.
\eeas
\end{proof}

\begin{proposition}[An error bound for $k$ of order at least $ct$
in the symmetric case]
\label{P3}
If
\be \label{assP2}
k\geqs\max\{5ct,2\},
\ee
then
\be \label{resP3}
\|r_k(t)\| \leqs \frac{16(ct)^k}{(2k-1)!!} 
\ee
\end{proposition}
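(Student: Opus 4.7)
The plan is to feed the preceding Bessel-function estimate~\eqref{resL2} directly into the bound~\eqref{resP2} of Proposition~\ref{P2}, and then exploit the hypothesis $k\geqs 5ct$ to control the resulting infinite sum by a convergent geometric-type series.

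First I would apply \eqref{resL2} termwise with $w=ct$ and $\mu=\nu$ inside \eqref{resP2}; the factors $e^{-ct}$ combine to give
\[
\|r_k(t)\| \leqs 4\sum_{\nu=k}^{+\infty}(\nu-k+2)(\nu-k+1)\frac{(ct)^\nu}{(2\nu-1)!!}.
\]
Reindexing by $\nu=k+j$ and factoring out $(ct)^k/(2k-1)!!$ isolates the claimed leading factor:
\[
\|r_k(t)\| \leqs \frac{4(ct)^k}{(2k-1)!!}\sum_{j=0}^{+\infty}(j+2)(j+1)\prod_{i=1}^{j}\frac{ct}{2k+2i-1},
\]
with the convention that the empty product ($j=0$) equals $1$.

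Next I would use the hypothesis \eqref{assP2}. Since $k\geqs 5ct$ (and $k\geqs 2$), each factor in the product satisfies
\[
\frac{ct}{2k+2i-1} \leqs \frac{ct}{2k} \leqs \frac{1}{10}, \qquad i\geqs 1,
\]
so the product is bounded by $(1/10)^j$. Substituting and evaluating the resulting power series via $\sum_{j=0}^{+\infty}(j+2)(j+1)x^j=2/(1-x)^3$ at $x=1/10$ gives $2000/729<4$, whence
\[
\|r_k(t)\| \leqs \frac{4(ct)^k}{(2k-1)!!}\cdot 4 = \frac{16(ct)^k}{(2k-1)!!},
\]
which is \eqref{resP3}.

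The main obstacle, such as it is, will be only cosmetic: making sure the geometric majorant $(1/10)^j$ is uniform in $i$ (this is where the constant $5$ in $k\geqs5ct$ is used, not merely a condition like $k\geqs ct$), and then bounding the closed-form sum $2000/729$ by a clean constant so as to land on exactly the factor $16$ stated in \eqref{resP3}. No deeper complex-analytic or combinatorial input is needed beyond Proposition~\ref{P2} and the elementary Bessel bound \eqref{resL2}.
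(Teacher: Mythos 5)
Your proof is correct, and it takes a genuinely different route from the paper's. The paper keeps the Bessel functions intact and bounds the ratio of consecutive terms of the series in \eqref{resP2} using Nasell's inequality $I_{\nu+1}(w)<I_\nu(w)/(1+\nu/w)$; under \eqref{assP2} this ratio (including the quadratic prefactor) is at most $1/2$, so the whole sum collapses to $4I_k(ct)$, and only then is the elementary bound \eqref{resL2} invoked once, with $\mu=k$. You instead apply \eqref{resL2} termwise to every $I_\nu(ct)$, which turns the tail into an explicit majorant whose consecutive double-factorial ratios $ct/(2k+2i-1)\leqs 1/10$ are controlled directly by $k\geqs 5ct$; the polynomial factor $(j+2)(j+1)$ is then absorbed exactly via $\sum_{j\geqs 0}(j+2)(j+1)x^j=2/(1-x)^3$, giving $4\cdot 2000/729<16$. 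Your computations check out (the reindexing $(2(k+j)-1)!!=(2k-1)!!\prod_{i=1}^{j}(2k+2i-1)$ and the closed-form sum are both right). What each approach buys: the paper's argument needs one extra external input (the Nasell ratio bound) but stays closer to the Bessel-function structure, which is the natural language of Proposition~\ref{P2}; yours is more elementary, requiring nothing beyond \eqref{resL2} and a power-series identity, and in fact yields the slightly sharper constant $8000/729\approx 11$ before rounding up to the stated $16$. Both use the constant $5$ in \eqref{assP2} for the same purpose, namely to make the per-term decay beat the quadratic growth of the prefactor.
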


\begin{proof}
Exploiting assumption (\ref{assP2}) and the inequality 
(see \cite[Proposition~1]{Nasell74})
\[
I_{\nu+1}(w)<\frac{I_\nu(w)}{1+\frac{\nu}{w}},
\qquad w>0, \qquad \nu\geqs-1,
\]
obtain for $\nu\geqs k$
\beas
\frac{(\nu-k+3)(\nu-k+2)I_{\nu+1}(ct)}{(\nu-k+2)(\nu-k+1)I_{\nu}(ct)}
= \left(1+\frac{2}{\nu-k+1}\right)
\cdot\frac{I_{\nu+1}(ct)}{I_\nu(ct)} \\
<\frac{3}{1+\frac{\nu}{ct}} \leqs \frac{3}{1+\frac{5ct}{ct}}
\leqs \frac{1}{2},
\eeas
so that, in view of (\ref{resL2}),
\[
e^{-ct} \sum_{\nu=k}^{+\infty} (\nu-k+2)(\nu-k+1)I_\nu(ct)
\leqs e^{-ct} \cdot 4I_k(ct)
\leqs \frac{4(ct)^k}{(2k-1)!!}.
\]
It remains to substitute the above bound into (\ref{resP2}).
\end{proof}

Proposition~\ref{P3} is an analogue of \cite[Theorem~5]{DruskinKnizh89}.
Bound~(\ref{resP3}) would be exponentially worse, if we used the Taylor series instead of the Chebyshev one; this clarifies the role of Faber series.

In Figure~\ref{estsymm} we plot the graph of the right-hand side
of~(\ref{resP2})
for the values $c$ and $t$ as occur in the numerical test presented
in Section~\ref{s:t1}.  Since $\|A\|_2^2\leqs\|A\|_1\|A\|_{\infty}$
(see, e.g., \cite[Chapter~5.6]{HornJohnsonI}) and $\|A\|_1=\|A\|_{\infty}$
for $A=A^T$,
we can estimate $c$ by $\|A\|_1/2$. This leads to the $c$ values
reported in caption of Figure~\ref{estsymm}.  The $t$ values used
in the plots (and reported in the caption) are taken to be approximately the average
time interval length $\bar{\delta}$ per restart, i.e., $t:=\bar{\delta}$, obtained from data in Table~\ref{t:p4}
as the total time interval length divided by the number of restarts.
In the figure we also plot the actual residual norm
values computed with the regular Lanczos process.

\begin{figure}[h]
\centerline{%
\includegraphics[width=0.48\textwidth]{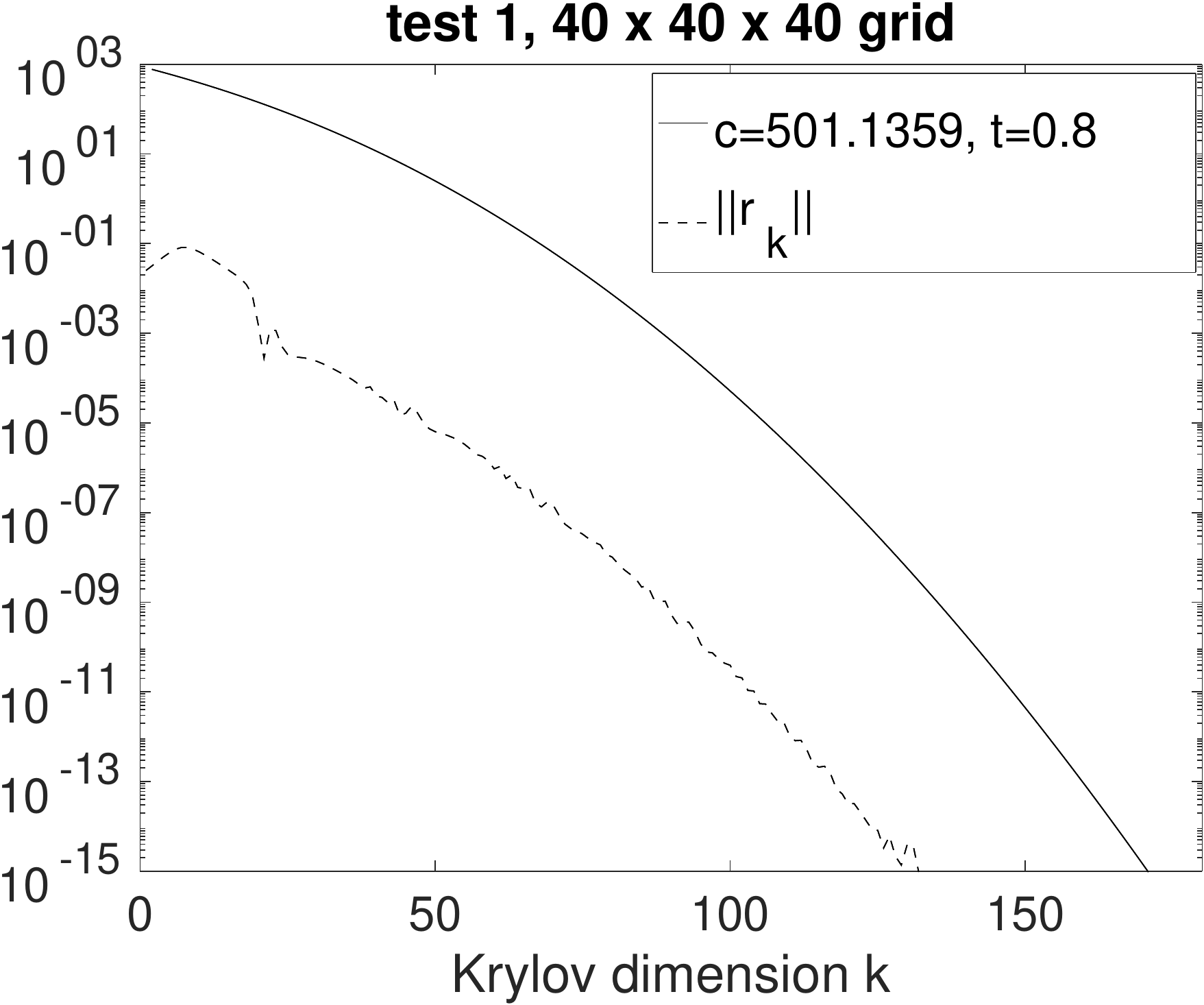}
\includegraphics[width=0.48\textwidth]{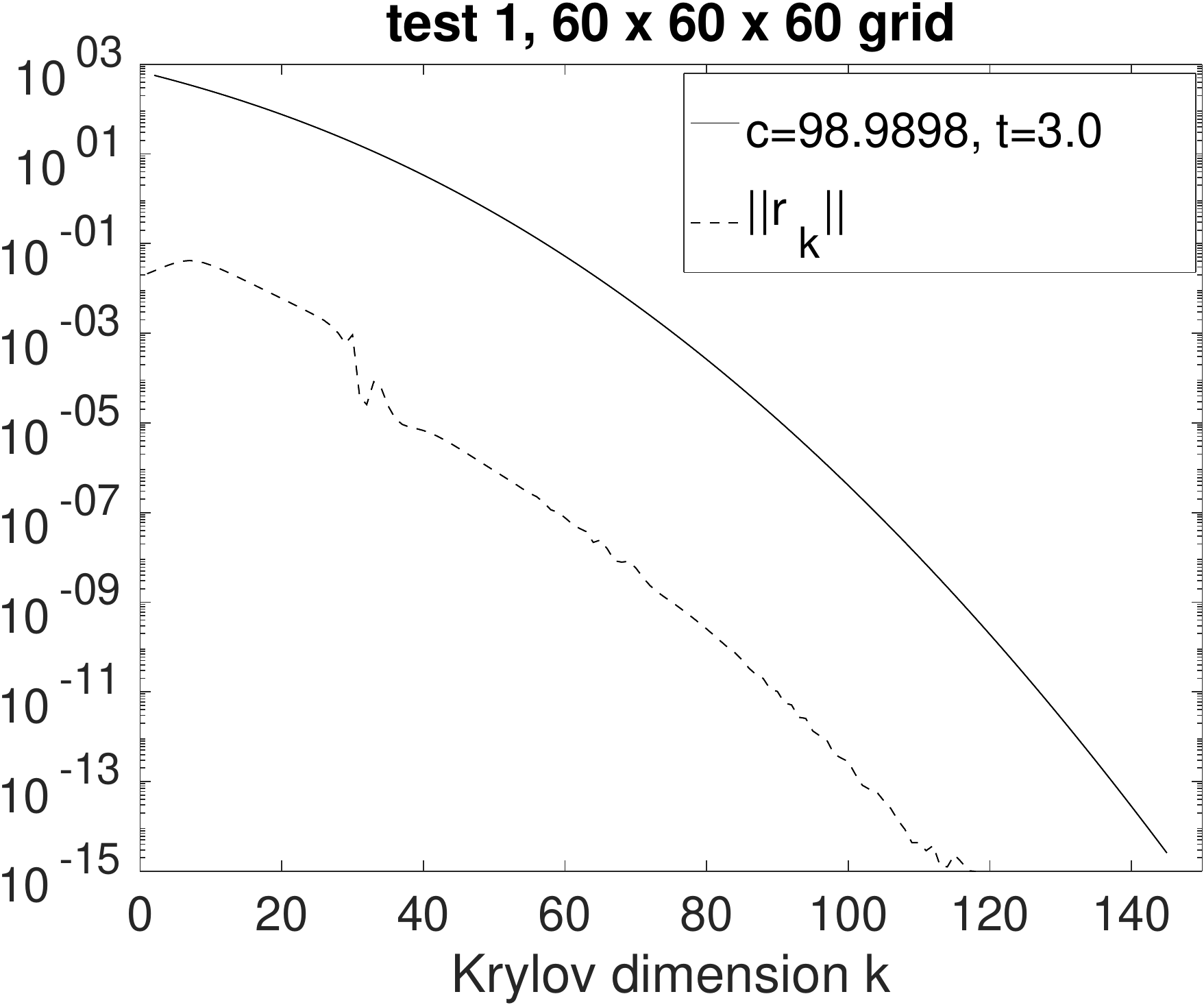}}
\caption{The solid lines are error upper bound~\eqref{resP2} for
  $c=501.1359$, $t=\bar{\delta}=0.8$ (left plot) and
  $c=98.9898$, $t=\bar{\delta}=3.0$ (right plot).
The values correspond to $A$ being discretized anisotropic elliptic operator described
in Section~\ref{s:t1}, for $40\times 40\times 40$ (left plot)
and $60\times 60\times 60$ (right plot) finite volume grids, respectively.
The dashed lines are the actual scaled residual norm values $\|r_k\|/\beta$.}
\label{estsymm}
\end{figure}

From Figure~\ref{estsymm} we see that the obtained upper bound is far from
being sharp.  This is certainly expected.  Although~\eqref{resP2} is a true
upper bound and contains no asymptotic estimates, it does not reflect
the sensibility of the Krylov subspace methods with respect to the
discrete structure of the spectrum.
To confirm this and to show that the upper bound in~\eqref{resP2} can be
sharp, in Figure~\ref{f:worst} we plot the upper bound
along with the actual residual norm values for the following
tridiagonal $A\in\Rrnn$ and $g\in\Rr^n$:
\begin{equation}
\label{worst}  
A = 
\begin{bmatrix}
  1          & 1/\sqrt{2} & 0      & \dots  & 0     \\
  1/\sqrt{2} &      1     & 1/2    & \ddots &\vdots \\
  0          &  1/2       & \ddots &\ddots  &       \\
  \vdots     &  \ddots    & \ddots & \ddots &
\end{bmatrix},
\quad g =
\begin{bmatrix}
1 \\ 0 \\ \vdots \\ 0  
\end{bmatrix}.
\end{equation}
It can be shown that $\|A\|\approx 2$ for large $n$, hence, we take
$n=1000$ and $c=1$.
The residual values are computed with the regular Lanczos process.
As we see from the plots, the upper bound~\eqref{resP2} is sharp.

\begin{figure}[h]
\centerline{%
\includegraphics[width=0.48\textwidth]{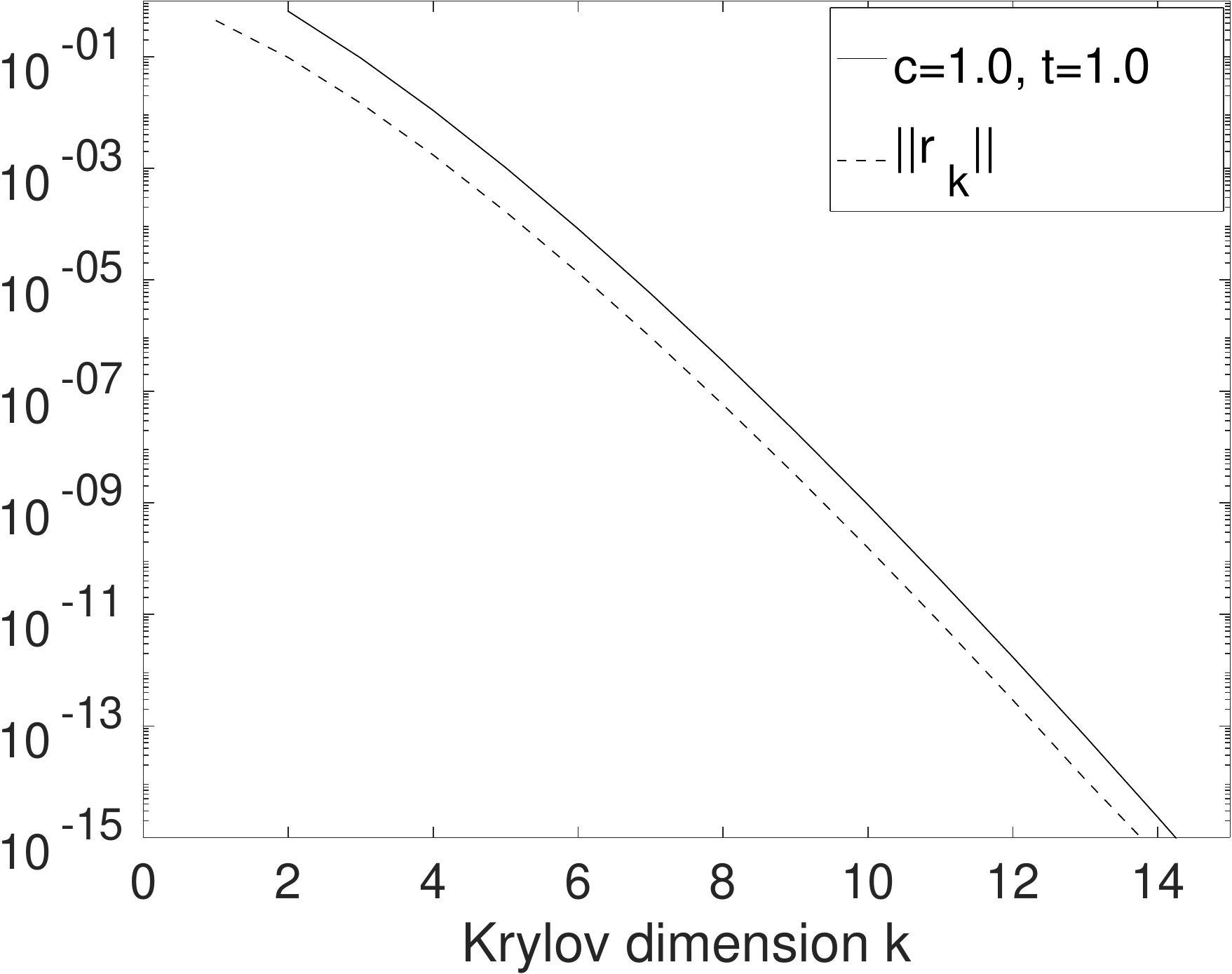}
\includegraphics[width=0.48\textwidth]{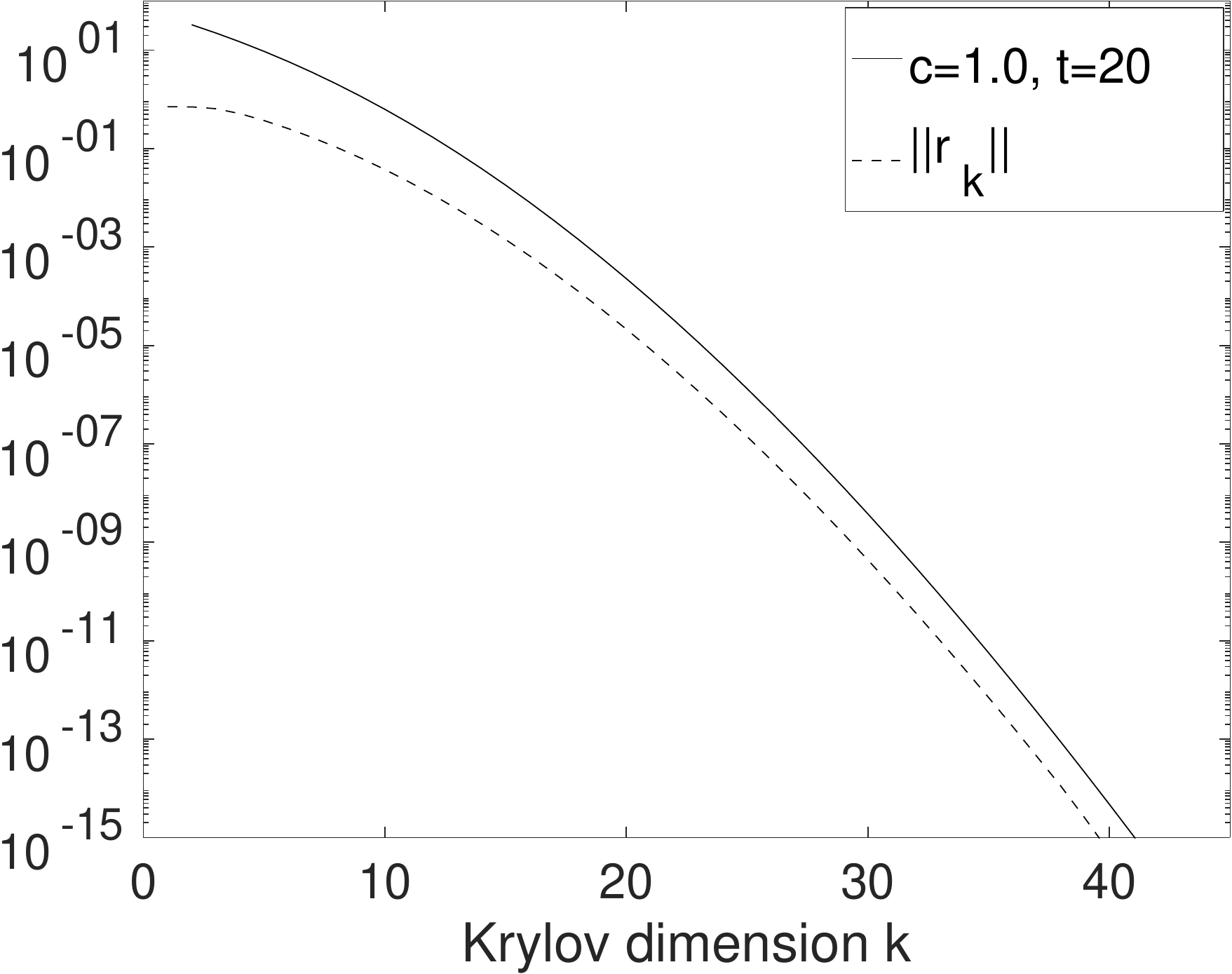}}
\caption{The solid lines are error upper bound~\eqref{resP2} for
  $c=1$, $t=1$ (left plot) and
  $c=1$, $t=20$ (right plot).
The values correspond to $A$ defined in~\eqref{worst}.
The dashed lines are the actual scaled residual norm values
$\|r_k\|/\beta$ obtained for $g$ defined in~\eqref{worst}.}
\label{f:worst}
\end{figure}

\subsection{A weakly non-symmetric case}
\begin{proposition}[An error bound for an ``elliptic'' numerical range] \label{P4}
Let the numerical range $W(A)$ be enlcosed by an
ellipse $E_{c,\rho}$ with foci $0$ and $2c$ with the sum of semiaxes $\rho c$, $c>0$, $\rho>1$. Then for $k\geqs2$
\be \label{resP4}
\|r_k(t)\| \leqs 4e^{-ct} \sum_{\nu=k}^{+\infty}
(\nu-k+2)(\nu-k+1)\rho^\nu I_\nu(ct).
\ee
\end{proposition}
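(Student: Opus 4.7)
My plan is to push the argument of Proposition~\ref{P2} through by replacing the interval $[0,2c]$ with the ellipse $E_{c,\rho}$ and tracking how the substitution $x=(z-c)/c$ acts on the Faber basis and on the estimate of Lemma~\ref{L1}. The affine map sends $E_{c,\rho}$ onto the ellipse with foci $\pm 1$ and sum of semiaxes $\rho$, so the underlying function $g(x;t)=(1-e^{-ct}e^{-ctx})/(c(x+1))$ is unchanged and only the expansion basis changes.

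First I would identify the Faber polynomials of $E_{c,\rho}$ in closed form. The conformal map from $\{|w|>1\}$ onto the exterior of $E_{c,\rho}$ is $\psi(w)=c+(c\rho/2)w+c/(2\rho w)$, so for $z=\psi(w)$ one has $(z-c)/c=(\rho w)/2+1/(2\rho w)$; the classical identity $T_j((\tilde u+\tilde u^{-1})/2)=(\tilde u^j+\tilde u^{-j})/2$ with $\tilde u=\rho w$ then gives
\[
\Phi_j^{E_{c,\rho}}(z)=2\rho^{-j}T_j\!\left(\frac{z-c}{c}\right),\qquad j\geqs 1,
\]
so that $\Phi_j^{E_{c,\rho}}(\psi(w))=w^j+\rho^{-2j}w^{-j}$, which both confirms the normalization and shows $|\Phi_j^{E_{c,\rho}}|\leqs 2$ on $E_{c,\rho}$ (so Proposition~\ref{P1} still applies with constant~$2$). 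Matching the Faber series of $f(z;t)$ against the Chebyshev series of $g(x;t)$ then produces the key relation that the Faber coefficients are $\rho$-weighted Chebyshev coefficients, $f_j(t)=(\rho^j/2)\,a_j[g]$ for $j\geqs 1$.

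Next I would upgrade Lemma~\ref{L1} to a weighted version. The explicit representation $a_m[g]=(4(-1)^m/c)\sum_{l=0}^\infty (l+1)e^{-ct}I_{m+l+1}(ct)$ obtained inside the proof of Lemma~\ref{L1}, combined with the substitution $\nu=m+l+1$ that swaps the order of summation, yields
\[
\sum_{m=k-1}^\infty\rho^m|a_m[g]|\;\leqs\;\frac{4e^{-ct}}{c}\sum_{\nu=k}^\infty I_\nu(ct)\sum_{m=k-1}^{\nu-1}\rho^m(\nu-m).
\]
Bounding $\rho^m\leqs\rho^{\nu-1}$ in the inner sum reduces it to $\rho^{\nu-1}(\nu-k+2)(\nu-k+1)/2$; this is where the factor $\rho^\nu$ of the target bound is produced, with one power of $\rho$ held in reserve.

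Inserting these ingredients into Proposition~\ref{P1} gives $\|r_k(t)\|\leqs h_{k+1,k}\sum_j\rho^j|a_j[g]|$; to collapse $h_{k+1,k}$ into the clean constant~$4$ of~\eqref{resP4} I would use $h_{k+1,k}\leqs\|A\|\leqs 2c\rho$, which follows from the fact that $E_{c,\rho}$ lies in the disk $\{|z|\leqs c+c(\rho+1/\rho)/2\}$ together with the elementary inequality $c(2+\rho+1/\rho)/2\leqs 2c\rho$ for $\rho\geqs 1$ (equivalent to $(3\rho+1)(\rho-1)\geqs 0$). The delicate step is exactly the coordination of the two $\rho$-powers: the factor $\rho^{-1}$ produced by the estimate $\rho^m\leqs\rho^{\nu-1}$ must cancel precisely the factor $\rho$ in $\|A\|\leqs 2c\rho$, so that the final constant collapses to~$4$ and the proof parallels Proposition~\ref{P2} with no residual $\rho$-dependence outside the sum.
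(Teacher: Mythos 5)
Your route coincides with the paper's: the published proof is a one-line sketch saying precisely that one should rerun the argument of Proposition~\ref{P2} using the relation $\Phi_j^{E_{c,\rho}}(z)=2\rho^{-j}T_j\bigl((z-c)/c\bigr)$ between the Faber polynomials of the ellipse and the shifted Chebyshev polynomials, and your reconstruction — the identification $f_j(t)=(\rho^j/2)a_j[g]$, the $\rho$-weighted rerun of Lemma~\ref{L1} with $\rho^m\leq\rho^{\nu-1}$ producing the weight $\rho^\nu$ while holding a factor $\rho^{-1}$ in reserve, and the insertion into Proposition~\ref{P1} — is exactly the intended computation and is carried out correctly.

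The one step that does not hold as you justify it is $h_{k+1,k}\leq\|A\|\leq 2c\rho$. From $W(A)\subseteq E_{c,\rho}\subseteq\{|z|\leq c+c(\rho+1/\rho)/2\}$ you may conclude that the \emph{numerical radius} of $A$ is at most $c(2+\rho+1/\rho)/2$, but for nonnormal $A$ — the only case in which this proposition adds anything over Proposition~\ref{P2} — the spectral norm can exceed the numerical radius by a factor up to $2$, so this containment only yields $\|A\|\leq c(2+\rho+1/\rho)$; and $c(2+\rho+1/\rho)\leq 2c\rho$ fails for $1<\rho<1+\sqrt{2}$, i.e.\ precisely in the weakly nonsymmetric regime the proposition targets. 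The inequality you actually need, $h_{k+1,k}\leq 2c\rho$, is nevertheless true and easily repaired: since $v_{k+1}\perp v_k$ you have $h_{k+1,k}=v_{k+1}^*(A-cI)v_k$, and $W(A-cI)$ lies in the disk of radius $\tfrac{c}{2}(\rho+1/\rho)$ centered at the origin, whence $h_{k+1,k}\leq\|A-cI\|\leq 2\cdot\tfrac{c}{2}(\rho+1/\rho)=c(\rho+1/\rho)\leq 2c\rho$ for $\rho\geq 1$. With this substitution your chain of estimates closes and reproduces \eqref{resP4} with the constant~$4$.
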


\begin{proof}
The proof is a slight generalization of that of proposition~\ref{P2}; the latter corresponds to the limit case $\rho=1$. One should exploit the relation (see \cite[Ch.~II, \S~1, (25)]{Suetin}) between the Faber polynomials for $E_{c,\rho}$ and the Faber polynomials for the focal segment $[0,2c]$ (actually, the shifted Chebyshev polynomials).
\end{proof}

Similarly to~\eqref{resP2}, upper bound~\eqref{resP4} is
easily computable by truncating the infinite series in~\eqref{resP4}.
  
In Figure~\ref{estnonsymm} we plot the right-hand side of (\ref{resP4})
for the matrices of the second and third test problems,
see Sections~\ref{s:t2} and~\ref{s:t3}.
In each of these two test problems several grids and corresponding parameter settings
are used, and the two matrices for the plots in Figure~\ref{estnonsymm}
are built for the grids $202\times 202$
(the second test) and $512\times 512$ (the third test), respectively.
The $t$ values are approximate average time interval length $\bar{\delta}$ per restart,
computed as the total time interval length divided by the number of restarts.
To obtain the ellipse parameters appearing in the bound,
we approximate the numerical ranges of the matrices $A$ by the
numerical ranges of their Krylov projections $H_k=V_k^TAV_k$, see~\eqref{Arn}.
This is done for a sufficiently large $k$ such that the exterior eigenvalues of $A$
are well approximated by the eigenvalues of $H_k$.
Thus, the plots in Figure~\ref{estnonsymm} are produced
for approximate spectral data but do suffice for illustrative purposes.
In the figure the actual residual norm is also plotted.

\begin{figure}[h]
\centerline{\includegraphics[width=0.48\textwidth]{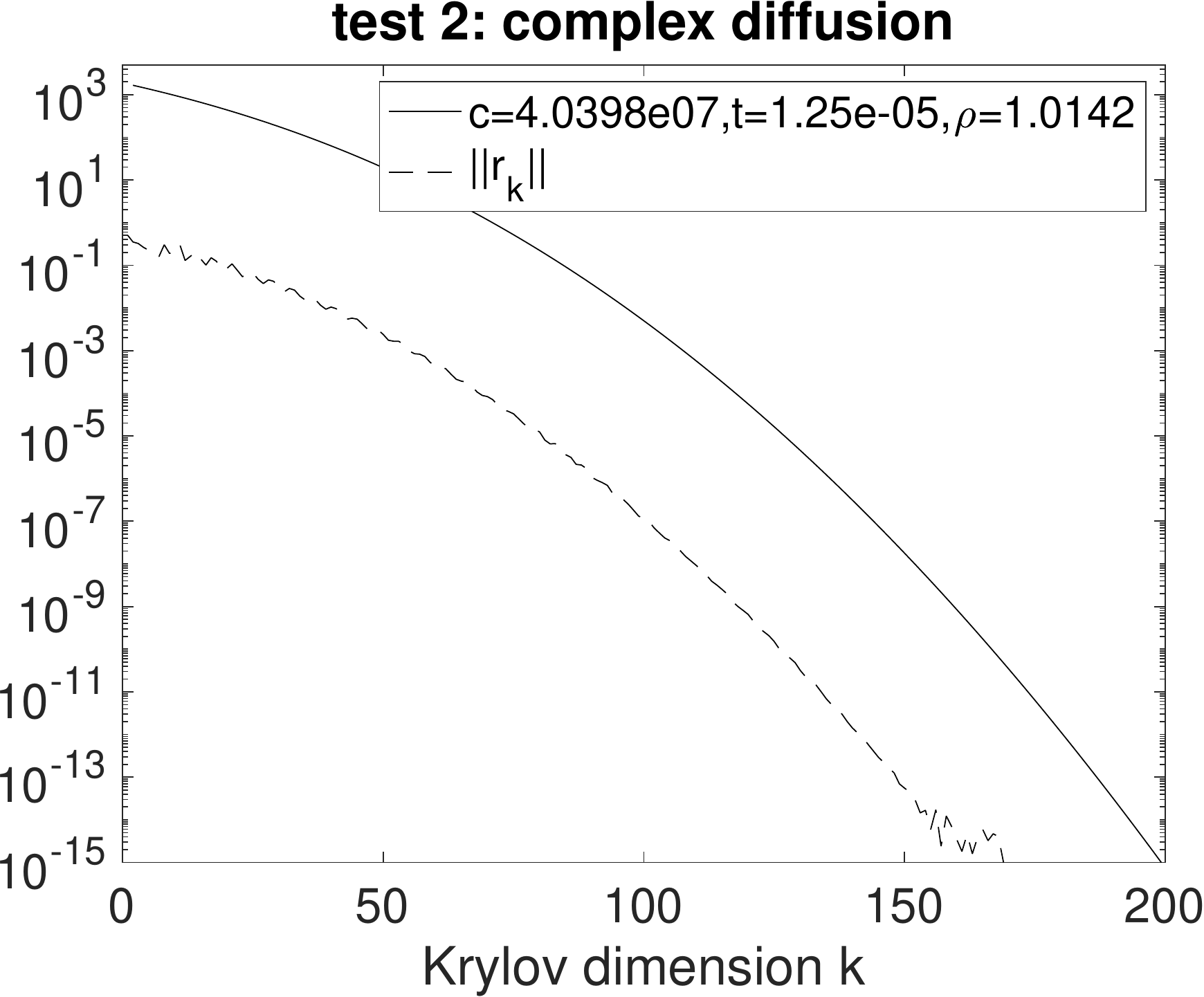}
\includegraphics[width=0.48\textwidth]{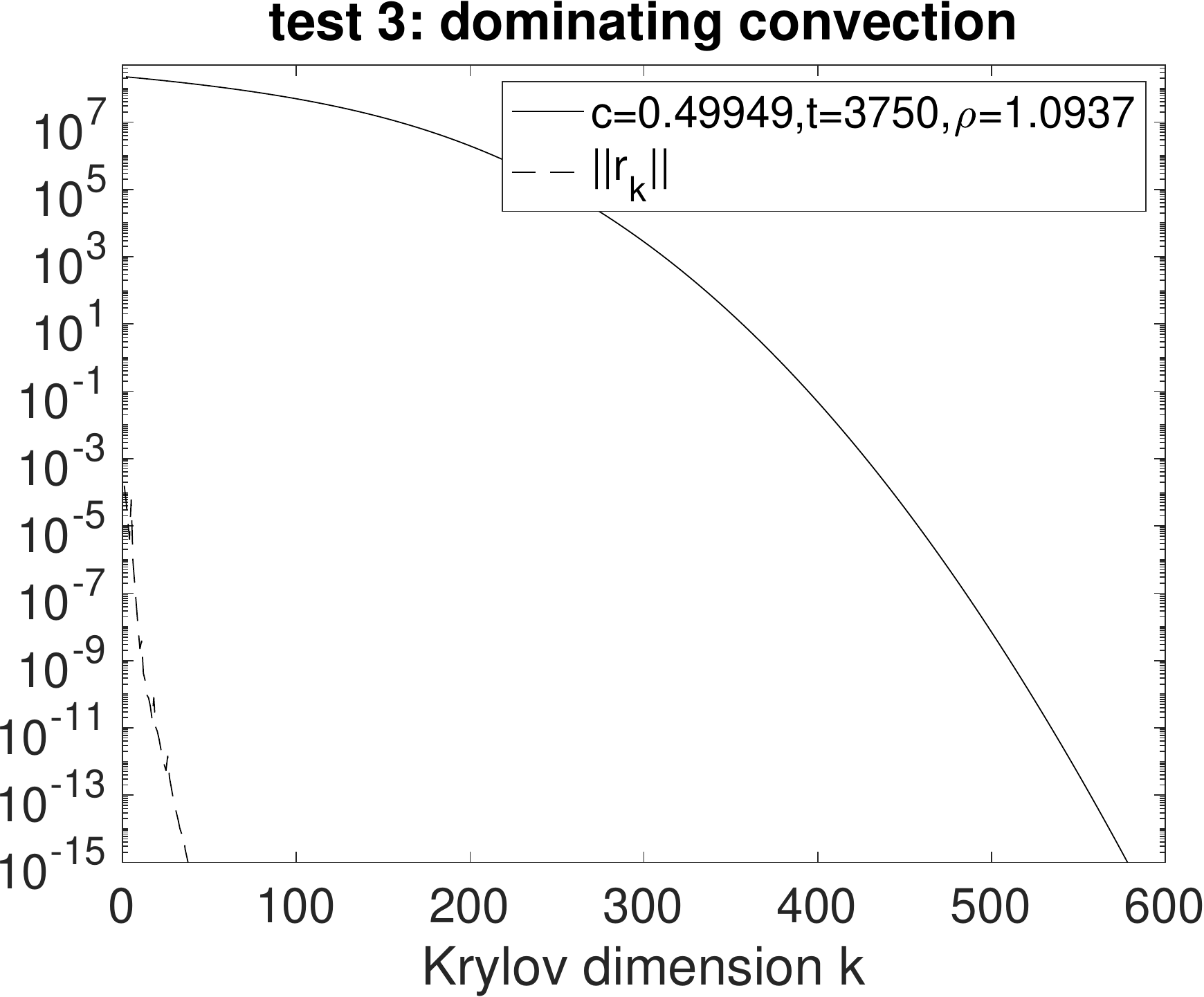}}
\caption{The solid lines are error upper bound~\eqref{resP4} for
$c={\tt4.0398e07}$, $t=\bar{\delta}=\texttt{1.25e-05}$, $\rho=1.0142$ (left plot) and
  $c=0.49949$, $t=\bar{\delta}=3750$, $\rho=1.0937$ (right plot).
  The values
  correspond to the matrices $A$ of the second
  test problem (see Section~\ref{s:t2}, grid $202\times 202$) and the third
  test problem (see Section~\ref{s:t3}, grid $512\times 512$), respectively.
  The dashed lines are the actual scaled residual norm values $\|r_k\|/\beta$.}
\label{estnonsymm}
\end{figure}

\section{Convergence of the restarted method}
\label{s:conv}

\begin{proposition}
  Let the RT restarted Krylov subspace method be used to compute an approximation
  to the solution $y(t)$, $t<\infty$, of IVP~\eqref{ivp} and
  let $\tol$ be the residual tolerance used at each restart.
  Then for any restart length $k_{\max}$ the RT restarted Krylov subspace method
  stops after a finite number $J$ of restarts and 
  the error of its approximate solution $y_k(t)$ 
  is bounded by $t\cdot\tol$,
  $$
  \|y(t)-y_k(t)\|\leqs t\cdot\tol,
  $$
  with $t$ being the length of the time interval on which problem~\eqref{ivp}
  is solved.
\end{proposition}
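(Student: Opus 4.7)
The statement splits into two claims: (a) the outer \texttt{while} loop terminates after finitely many restarts, and (b) the overall error obeys $\|y(t)-y_k(t)\|\leqs t\cdot\tol$. Once (a) is secured, (b) is short: concatenating the subproblem approximations produces a global approximation whose residual coincides with the local residual on each subinterval and is therefore bounded by $\tol$ almost everywhere on $[0,t]$; applying~\eqref{err_bnd} on $[0,t]$ and using $\varphi(-t\omega)\leqs\varphi(0)=1$ (since $\varphi$ is increasing and $t\omega\geqs0$) yields $\|y(t)-y_k(t)\|\leqs t\cdot\tol$. So the real work is in (a).

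\textit{Uniform lower bound on the restart length.} Let $v^{(j)}$ denote the initial vector at the $j$th restart, $\beta^{(j)}=\|g-Av^{(j)}\|$, and $\delta^{(j)}$ the length chosen in the line marked $(\circ)$ of Figure~\ref{f:rst_alg}. Applying Proposition~\ref{prop1} to the $j$th subproblem with $k=k_{\max}$, together with the standard Arnoldi bound $h_{k+1,k}\leqs\|A\|$ and $\varphi(-s\omega_{k_{\max}})\leqs1$ for $s\geqs0$, gives
\[
\|r^{(j)}_{k_{\max}}(s)\|\leqs \beta^{(j)}\|A\|\,s,\qquad s\geqs0.
\]
Thus any $\delta\leqs\tol/(\beta^{(j)}\|A\|)$ passes the tolerance test, so $\delta^{(j)}\geqs\tol/(\beta^{(j)}\|A\|)$.

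\textit{Uniform bound on $\beta^{(j)}$.} Let $\tau_j=\delta^{(0)}+\cdots+\delta^{(j-1)}$. The derivative $y'$ of the exact solution of~\eqref{ivp} satisfies $y''=-Ay'$ with $y'(0)=g-Av^{(0)}$, whence $y'(\tau_j)=\exp(-\tau_j A)(g-Av^{(0)})$ and by~\eqref{exp_est}, $\|y'(\tau_j)\|\leqs\beta^{(0)}$. Writing $g-Av^{(j)}=y'(\tau_j)+A(y(\tau_j)-v^{(j)})$ gives
\[
\beta^{(j)}\leqs\beta^{(0)}+\|A\|\,\|y(\tau_j)-v^{(j)}\|.
\]
By the inductive hypothesis, the residual is $\leqs\tol$ on all already completed subintervals, so~\eqref{err_bnd} applied on $[0,\tau_j]$ yields $\|y(\tau_j)-v^{(j)}\|\leqs\tau_j\cdot\tol\leqs t\cdot\tol$. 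Hence $\beta^{(j)}\leqs M:=\beta^{(0)}+\|A\|\,t\cdot\tol$ and $\delta^{(j)}\geqs\delta_*:=\tol/(M\|A\|)>0$. Since the remaining time shrinks by at least $\delta_*$ at every outer iteration that does not trigger the overall convergence test, the loop terminates after at most $J\leqs\lceil t/\delta_*\rceil$ restarts.

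\textit{Main obstacle.} The delicate point is the apparent circularity: the lower bound on $\delta^{(j)}$ needs a bound on $\beta^{(j)}$, which in turn uses the error estimate on $[0,\tau_j]$ that we are trying to prove. The resolution is induction on $j$, because the error bound on $[0,\tau_j]$ only consumes residual information from the already completed subintervals, where $\|r\|\leqs\tol$ holds by construction; no assumption about future restarts is required. A minor edge case $h_{k_{\max}+1,k_{\max}}=0$ is a lucky Arnoldi breakdown, for which the subproblem is solved exactly and convergence is immediate.
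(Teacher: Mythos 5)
Your proof is correct, and on the termination claim it is genuinely more careful than the paper's. The paper disposes of finiteness of $J$ in one line: by Propositions~\ref{prop1}, \ref{P2}, \ref{P4} each restart admits \emph{some} $\delta_j>0$ with residual below $\tol$, and since $t$ is finite, finitely many restarts suffice. Strictly speaking, positivity of each $\delta_j$ alone does not exclude $\sum_j\delta_j<t$; your uniform lower bound $\delta^{(j)}\geqs\delta_*>0$ --- obtained from Proposition~\ref{prop1} with $h_{k+1,k}\leqs\|A\|$ and $\varphi\leqs 1$, combined with the inductive bound $\beta^{(j)}\leqs\beta^{(0)}+\|A\|\,t\cdot\tol$ --- supplies exactly the missing ingredient, and your resolution of the apparent circularity by induction on completed subintervals is sound. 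Two small repairs are advisable: the search in line $(\circ)$ of Figure~\ref{f:rst_alg} returns a $\delta$ on a discrete grid, so the clean bound $\delta^{(j)}\geqs\tol/(\beta^{(j)}\|A\|)$ should be relaxed to, say, $\delta^{(j)}\geqs\min\{t_j/100,\;\tol/(2\beta^{(j)}\|A\|)\}$ with $t_j$ the remaining interval; and since the branch $\delta^{(j)}\geqs t_j/100$ by itself would still allow infinitely many restarts, you should observe that once $t_j\leqs\tol/(M\|A\|)$ the convergence test in line $(*)$ fires and the outer loop stops, which restores a uniform $\delta_*$ for all non-final restarts. For the error bound the two arguments are equivalent: the paper telescopes explicitly, writing the accumulated error as $\varepsilon_J=\sum_{j}\exp\bigl(-(t-\tau_j)A\bigr)\epsi_j$ with $\|\epsi_j\|\leqs\delta_j\cdot\tol$ and $\|\exp(-\,\cdot\,A)\|\leqs 1$ from~\eqref{exp_est}, which is precisely the discrete (variation-of-constants) form of your single application of~\eqref{err_bnd} to the concatenated, piecewise-$C^1$ approximation whose residual is below $\tol$ almost everywhere. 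Your route is shorter; the paper's makes the damping of earlier local errors by $\exp(-\delta A)$ explicit, which is the more informative statement when $\omega>0$.
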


\begin{proof}
  Based on Propositions~\ref{prop1}, \ref{P2}, \ref{P4},
  we know that at each restart $j$
  a $\delta_j>0$ can be found such that the residual norm does not exceed
  any tolerance $\tol$ on the time interval of length $\delta_j$.
  Hence, since the time interval length $t$ is finite and is reduced
  at each restart by $\delta_j$, a finite number $J$ of
  restarts will suffice.  
  The time interval after $j$ restarts is reduced from
  $[0,t]$ to
  $[0,t-(\delta_1+\dots+\delta_j)]$ and $\sum_{j=1}^J\delta_j=t$.

  To simplify the notation in the proof, 
  by $y_j(s)$, $s\in[0,\delta_j]$ we denote
  the Krylov subspace solution which is obtained, with $k\leqs k_{\max}$
  steps of the Arnoldi process, at restart $j$
  ($k< k_{\max}$ is possible only for $j=J$).
  We also denote the residual of $y_j(s)$ by $r_j(s)$.
  With this notation, $y_j(s)$ approximates the exact solution
  $y(\sum_{i=1}^{j-1}\delta_i + s)$
  for $s\in[0,\delta_j]$ with residual $r_j(s)$ such that
  $$
  \max_{s\in[0,\delta_j]}\|r_j(s)\|\leqs\tol.
  $$
  Then the approximate solution at the first restart is
  $$
  y_1(\delta_1)= v + \delta_1\varphi(-\delta_1A)(g-Av) + \epsi_1(\delta_1),
  $$
  where $\epsi_1(s)$ is the error of the approximation $y_1(s)\approx y(s)$.
  Using~\eqref{err_bnd} we can bound $\|\epsi_1\|$ as
  $$
  \|\epsi_1(\delta_1)\|\leqs \delta_1\varphi(-\delta_1\omega)\max_{s\in[0,\delta_1]}\|r_1(s)\|
  \leqs \delta_1\varphi(-\delta_1\omega)\cdot\tol
  \leqs \delta_1\cdot\tol,
  $$
  where $\omega\geqs 0$ is the constant from~\eqref{omega}.
  To further simplify the notation, let us denote $\epsi_j=\epsi_j(\delta_j)$
  and $y_j=y_j(\delta_j)$.  Then at restart~2 we have
  $$
  \begin{aligned}
    y_2 &= y_1+\epsi_1 + \delta_2\varphi(-\delta_2A)(g-A(y_1+\epsi_1)) + \epsi_2\\
    &=\underbrace{y_1 + \delta_2\varphi(-\delta_2A)(g-Ay_1)}_{\text{exact solution}}
    + \, \underbrace{\delta_2\varphi(-\delta_2A)(-A\epsi_1) +\epsi_1+ \epsi_2}_%
    {\varepsilon_2},
  \end{aligned}
  $$
  where we denote all the error terms by $\varepsilon_2$.  Now note that
  $$
  \delta_2\varphi(-\delta_2A)(-A\epsi_1) +\epsi_1 = \exp(-\delta_2A)\epsi_1,
  $$
  and, hence,
  $$
  \varepsilon_2=\exp(-\delta_2A)\epsi_1 + \epsi_2,
  \quad \|\varepsilon_2\|\leqs
  \|\epsi_1 + \epsi_2\|\leqs(\delta_1+\delta_2)\cdot\tol.
  $$
  Here we bound $\epsi_2$ in the same way as $\epsi_1$ and use
  estimate~\eqref{exp_est}.
  Continuing in the same way, we obtain at the last restart $J$
  $$
  \begin{aligned}
  \varepsilon_J &= \exp(-(t-\delta_1)A)\epsi_1 +
  \exp(-(t-\delta_1-\delta_2)A)\epsi_2 + \dots \\
  &\phantom{=} +  \exp\left(-(t-\sum_{j=1}^{J-1}\delta_j)A\right)\epsi_{J-1} + \epsi_J,
  \\
  \|\varepsilon_J\| & \leqs \|\epsi_1\| + \dots + \|\epsi_J\|
  \leqs (\delta_1+\dots+\delta_J)\cdot\tol = t\cdot\tol. 
  \end{aligned}
  $$
  \end{proof}

\begin{figure}
\centerline{%
\includegraphics[width=0.4\textwidth]{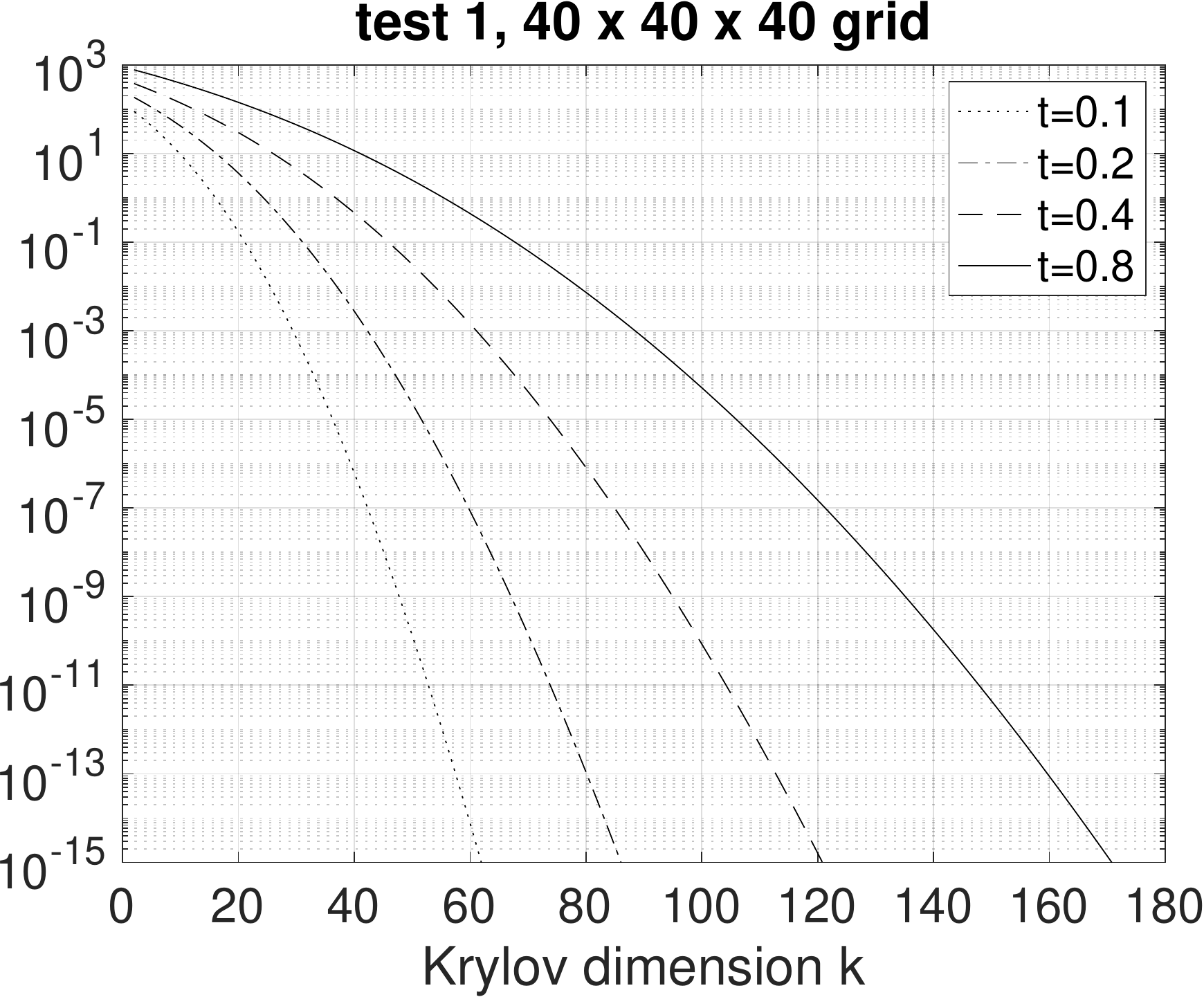}}
\caption{Residual upper bound~\eqref{resP2} for
$c=501.1359$ (the first test problem, $40\times 40\times 40$ grid)
and different time interval lengths $t$.}
\label{f:est_tt}
\end{figure}

The number of restarts $J$ appearing in Proposition~\ref{s:conv},
can be estimated from the residual upper
bounds~\eqref{resP2},\eqref{resP3},\eqref{resP4}.
This can be done, for example, using plots as shown in Figure~\ref{f:est_tt}.
There, for the test problem described in Section~\ref{s:t1},
we plot the bound~\eqref{resP2} for different time interval lengths $t$.
For instance, if $k_{\max}\approx 30$ and the requested tolerance
is in the range $[10^{-5},10^{-3}]$, then
the restart interval length $\delta$ should be approximately $0.2$
(see the dotted curve in Figure~\ref{f:est_tt}).
Hence, for the time interval length $t=1000$ used in the test,
$J\approx 1000/0.2=5000$ restarts are expected.
Since the bound~\eqref{resP2} is not sharp, this should be
seen as a safe, pessimistic estimate on $J$.
Indeed, as we see from Table~\ref{t:p4}, the actual number of restarts
is roughly four times smaller.

\section{Numerical experiments}
\label{s:num}
In the experiments presented here we test our method
to compute actions of $\varphi(-tA)$ implemented
with the proposed RT restarting.  We compare our code \phiRT{}
with two other well known Krylov subspace codes for evaluating
$\varphi(-tA)$, namely
the \phiv{} function of the EXPOKIT package~\cite{EXPOKIT}
and the \phip{} function presented in~\cite{NiesenWright12}
and publicly
available online\footnote{See \url{http://www1.maths.leeds.ac.uk/~jitse/software.html}}.
We do not include the recently proposed
code \phipm{}~\cite{NiesenWright12} in the comparisons because this code
uses a variable Krylov dimension up to~$100$.  Using such a high Krylov
dimension in the large scale problems is not always feasible,
and in the tests we restrict the Krylov dimension for all the
three solvers by~$30$.

All the tests are carried out on a Linux laptop with four AMD Ryzen~3 3200U CPUs
(clock rate 2.6~GHz) and 8~Gb memory, in Octave (version~4.2.2)
with no parallelization.
Our implementation of the Arnoldi and Lanczos processes does not include
Krylov basis vector reorthogonalization and no serious
orthogonality loss has been observed.
In the numerical tests the Krylov subspace matrices are
computed by the Lanczos process as soon as $A=A^T$, otherwise
the Arnoldi process is employed.

To calculate matrix-vector products $\varphi(-tH_k)x$
for the small projected Krylov subspace matrix $H_k$
in our algorithm (cf.~\eqref{u(s)}),
we employ relation~\cite[Proposition~2.1]{Saad92}
$$
\exp(
\begin{bmatrix}
-tH_k & x \\ 0 & 0  
\end{bmatrix})
= 
\begin{bmatrix}
\exp(-tH_k) & \varphi(-tH_k)x \\ 0 & 1  
\end{bmatrix},
$$
computing the matrix exponential at the left hand side and,
thus, reducing computations with $\varphi$ to
computations with the matrix exponential
(see also~\cite{AlmohyHigham2011}).
To compute the matrix exponentials $\exp(-tH_k)$ here and
in~\eqref{u(s)} the standard built-in function \texttt{expm} of Octave
is used.
It is based on Ward's scaling and squaring method combined with
diagonal Pad\'e approximation and is certainly not the best method available,
see~\cite{19ways,HighamAlmohy2010}.
For the moderate tolerance range $[10^{-7},10^{-3}]$
we aim in the tests, we have observed no accuracy problems
with this Octave implementation.
We have chosen to stick to the built-in function \texttt{expm} to have our
Octave code fully Matlab portable. Note that Matlab's implementation
of \texttt{expm} \cite{AlmohyHigham2010b} is seen as more reliable but is not
publicly available.
For higher accuracy requirements we could switch, for instance,
to the \texttt{phipade} function of the EXPINT package~\cite{EXPINT}.

In all the tests the reported achieved error
is measured for each of the three Krylov subspace solvers as
$$
\frac{\|y_k(t)-y_{\mathrm{ref}}(t)\|}{\|y_{\mathrm{ref}}(t)\|},
$$
where $y_k(t)$ is the solution of the Krylov subspace solver at final time $t$
and $y_{\mathrm{ref}}(t)$ is a reference solution computed with a high tolerance
(reported for each test below) by the \phiv{} function.
Since the reference
solution $y_{\mathrm{ref}}(t)$ is affected by the same space discretization error as
$y_k(t)$, 
$\|y_k(t)-y_{\mathrm{ref}}(t)\|$ is a good indicator of the time error.

\subsection{High-order discretized anisotropic elliptic operator}
\label{s:t1}
In this test we solve IVP~\eqref{ivp} with a symmetric matrix $A$.
It is a 3D (three-dimensional) anisotropic elliptic operator
$10^4\partial_{xx} + 10^2\partial_{yy} + \partial_{zz}$
with periodic boundary conditions defined in the spatial domain $[0,1]\times[0,1]\times[0,1]$
and discretized by a fourth order 
accurate finite volume discretization described in~\cite{VerstappenVeldman2003}.
The source vector $g$ contains the grid values of either
of the functions
$$
\begin{aligned}
& \sin(2\pi x)\sin(2\pi y)\sin(2\pi z) +
  x(1-x) y(1-y) z(1-z),
\\
& \mathrm{e}^{ -500( (x-0.5)^2 + (y-0.5)^2 + (z-0.5)^2 ))} +
x(1-x) y(1-y) z(1-z).
\end{aligned}
$$
We use two different source vectors $g$ to make sure that Krylov subspace
methods do not profit from a particular choice of the source vector.
In this test the initial value vector $v$ in~\eqref{ivp} is set to zero.

In the test we use uniform $40\times40\times40$ and $60\times60\times60$ grids,
for which the problem size is $n=64\,000$ and $n=216\,000$,
respectively.
For these grids the matrix $A$ has its norms
$\|A\|_1=\|A\|_{\infty}$ approximately equal to $1002.27$ and 
$197.98$, respectively.  These values give the estimations $\|A\|_1/2$
of $c$ used in Figure~\ref{estsymm}.
The final time is $t=1000$ and
the reference solution is computed by
function \phiv{} with tolerance $\tol=\texttt{1e-11}$.
%

The results for this test are presented in Table~\ref{t:p4}
and in Figure~\ref{f:acc_p4},
where for each of the solvers 
the values of achieved accuracy, the number of matrix-vector multiplications
(matvecs) and the CPU times are given.
These values are reported for the same tolerance {\tt1e-6}.
Exploring performance
of the solvers for different tolerance values leads to plots in
Figure~\ref{f:acc_p4}.  As can be seen in the plots, on the coarser grid
function \phiv{} fails to produce results with an accuracy less stringent
than $\approx 10^{-7}$.  Its least accurate result (corresponding to
the upper left end of the dashed curve) is obtained for
tolerance {\tt5e-2}. An attempt to run the code with a less stringent tolerance
{\tt1e-1} yields an error message.  Furthermore,
the \phip{} solver does not produce
the results within the required accuracy range $[10^{-7},10^{-3}]$
and appears to be more expensive for this test than our \phiRT. 
For both spatial grids the \phip{} solver exhibits
an abrupt change in the delivered accuracy with gradually varying
requested tolerance, indicated in the Figure by
segments \textsf{AB} and \textsf{CD}.  Thus, we see that
both \phiv{} and \phip{} solvers have difficulties
with delivering results within the desired accuracy
range $[10^{-7},10^{-3}]$.

\begin{table}
\caption{Results for the fourth order finite-volume discretized anisotropic heat equation}
\label{t:p4}
\centering{\begin{tabular}{lcc}
\hline\hline
method(Krylov dim.),       & delivered & matvecs /   \\
requested tolerance        &   error   & CPU time, s \\
\hline
\multicolumn{3}{c}{$40\times 40\times 40$ grid, sine $g$}
\\
\phiv($10$), {\tt1e-06}    & {\tt3.05e-12} & 185472 / 1380 \\
\phip($10$), {\tt1e-06}    & {\tt7.75e-14} & 170420 / 1282 \\
\phiRT($10$), {\tt1e-06}   & {\tt1.37e-09} & 101487 / 824  \\
\phiv($30$), {\tt1e-06}    & {\tt6.07e-13} & 61632 / 482 \\
\phip($30$), {\tt1e-06}    & {\tt6.89e-14} & 79710 / 545 \\
\phiRT($30$), {\tt1e-06}   & {\tt7.78e-09} & 33837 / 238 \\
\hline
\multicolumn{3}{c}{$40\times 40\times 40$ grid, Gaussian $g$}
\\
\phiv($30$),  {\tt1e-06} & {\tt1.33e-10}  & 63520 / 565 \\
\phip($30$),  {\tt1e-06} & {\tt9.98e-14}  & 80910 / 572 \\
\phiRT($30$), {\tt1e-06} & {\tt8.56e-09}  & 36117 / 324 \\
\hline
\multicolumn{3}{c}{$60\times 60\times 60$ grid, sine $g$}
\\
\phiv($30$),  {\tt1e-06}  & {\tt2.40e-11}  & 16800 / 692  \\
\phip($30$),  {\tt1e-06}  & {\tt2.91e-13}  & 17640 / 525  \\
\phiRT($30$), {\tt1e-06}  & {\tt1.59e-08}  & 10191 / 258  \\
\hline
\end{tabular}}
\end{table}

\begin{figure}
\begin{center}    
\includegraphics[height=0.37\textwidth]{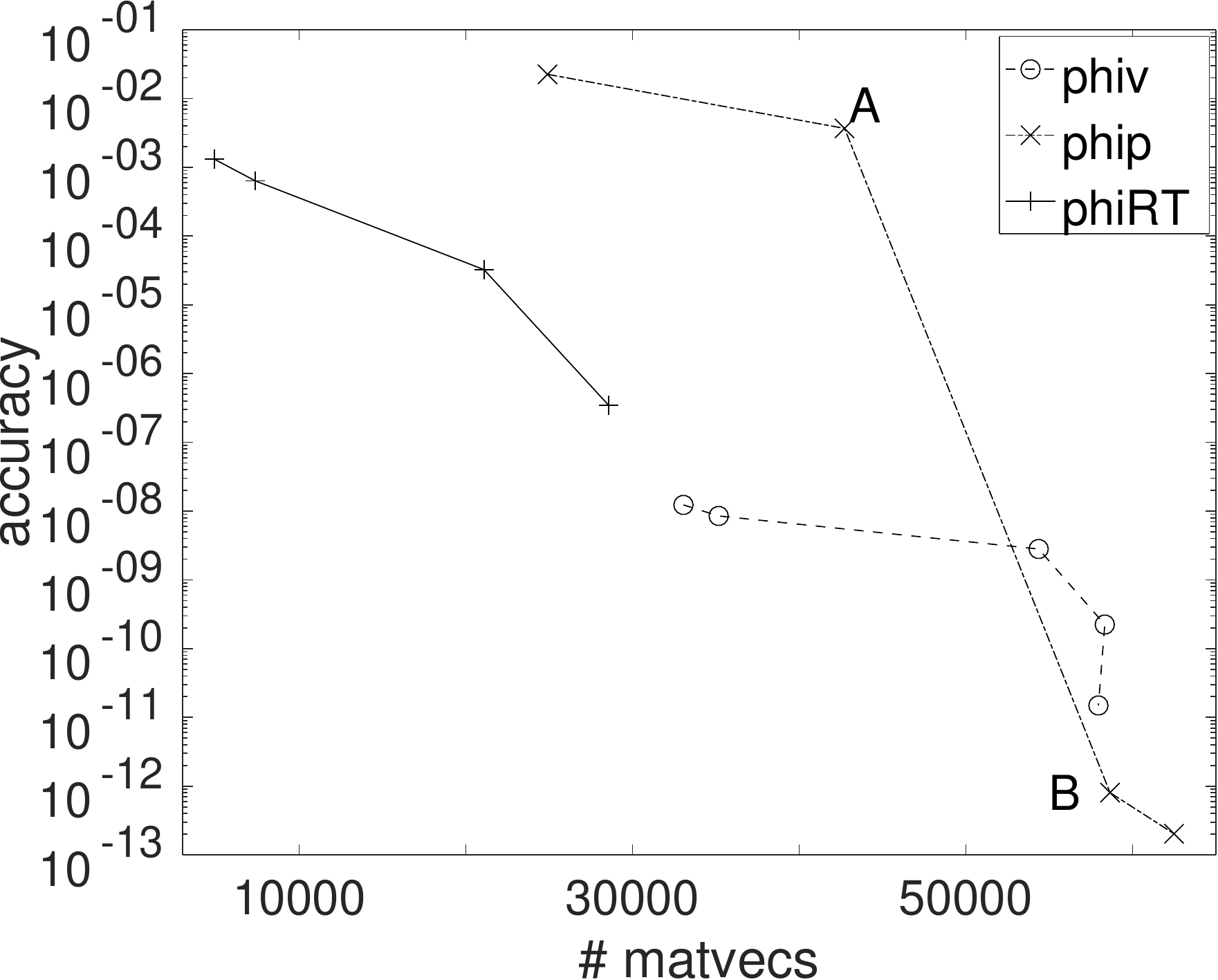}\hfill
\includegraphics[height=0.37\textwidth]{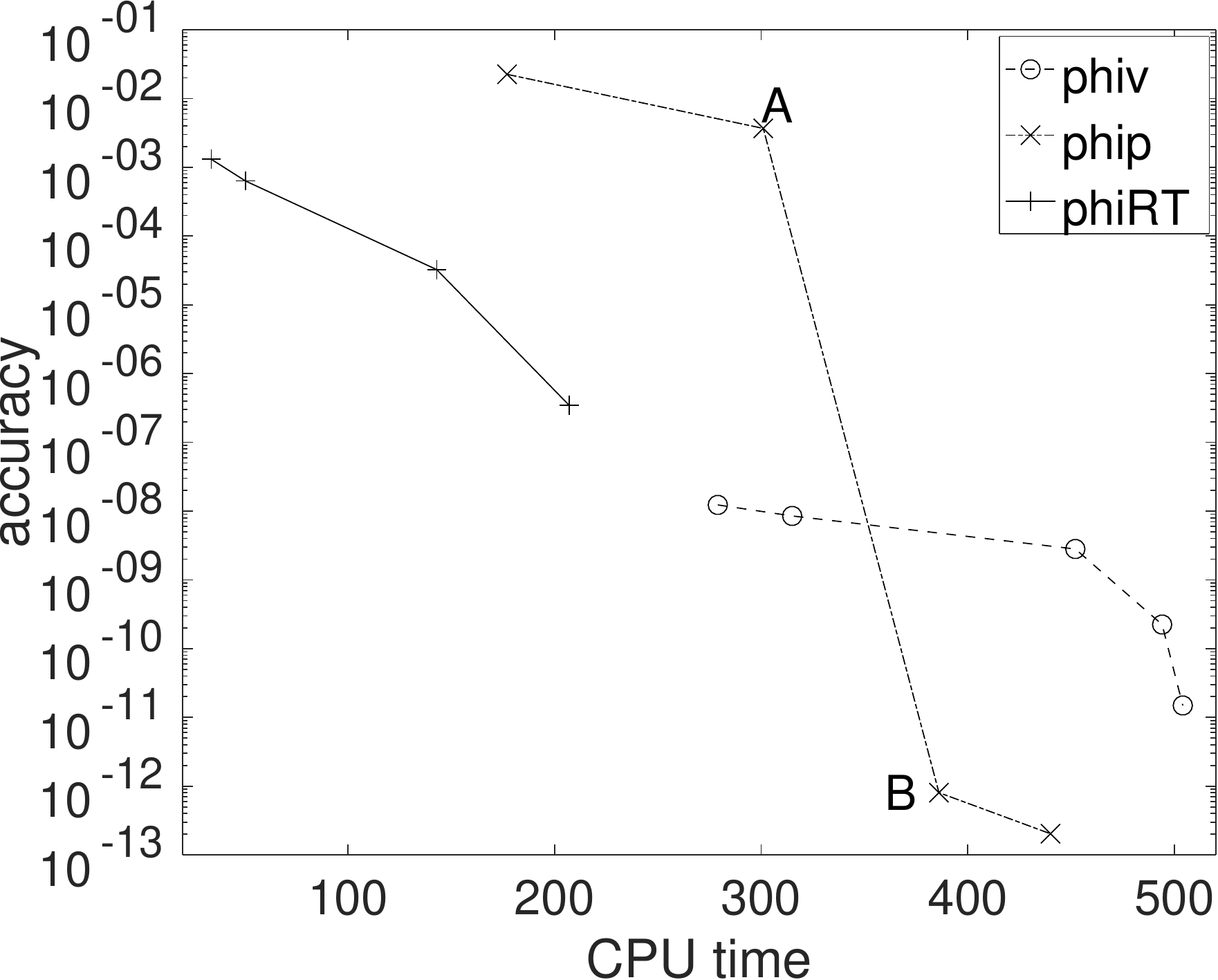}  
\\[2ex]
\includegraphics[height=0.37\textwidth]{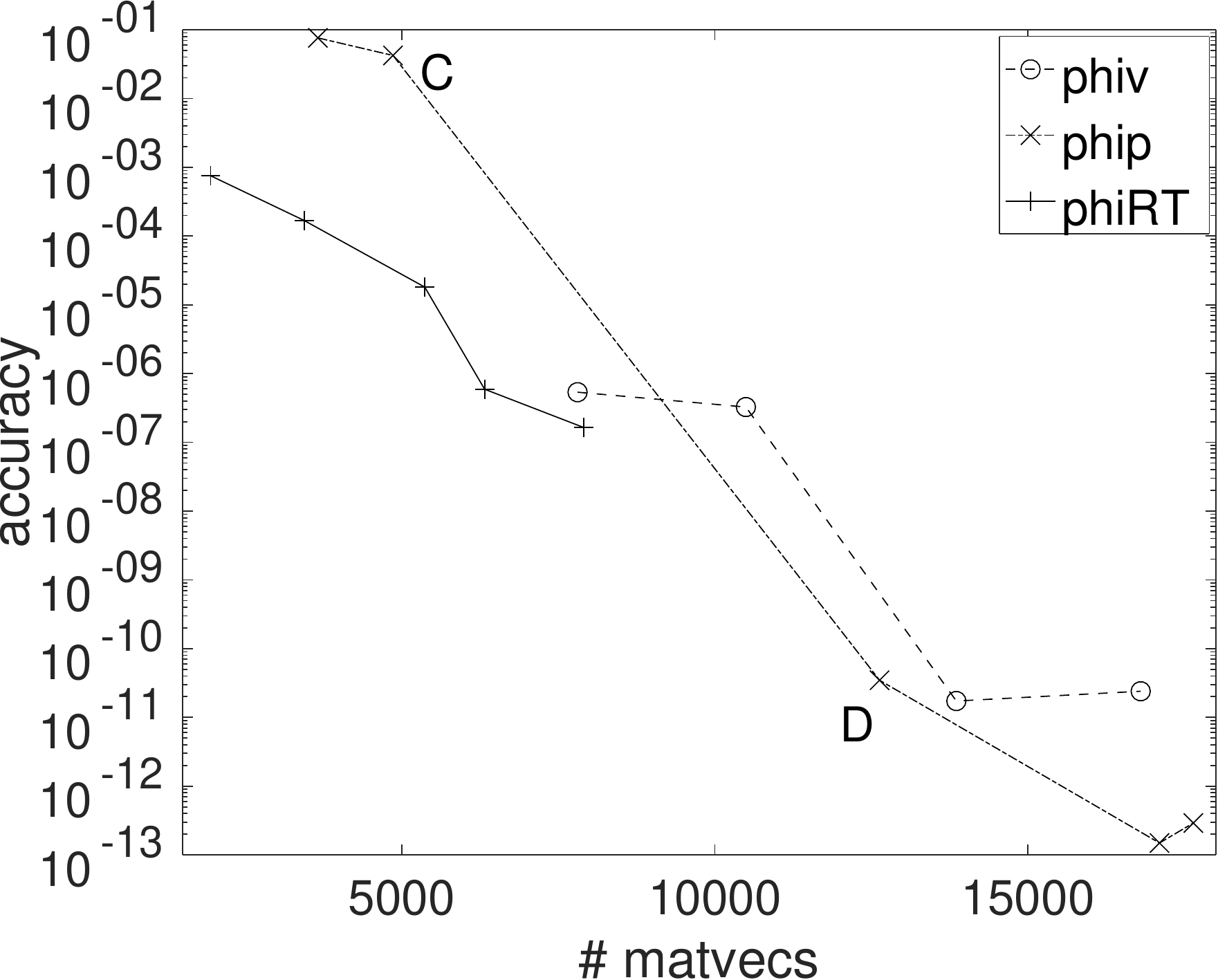}\hfill
\includegraphics[height=0.37\textwidth]{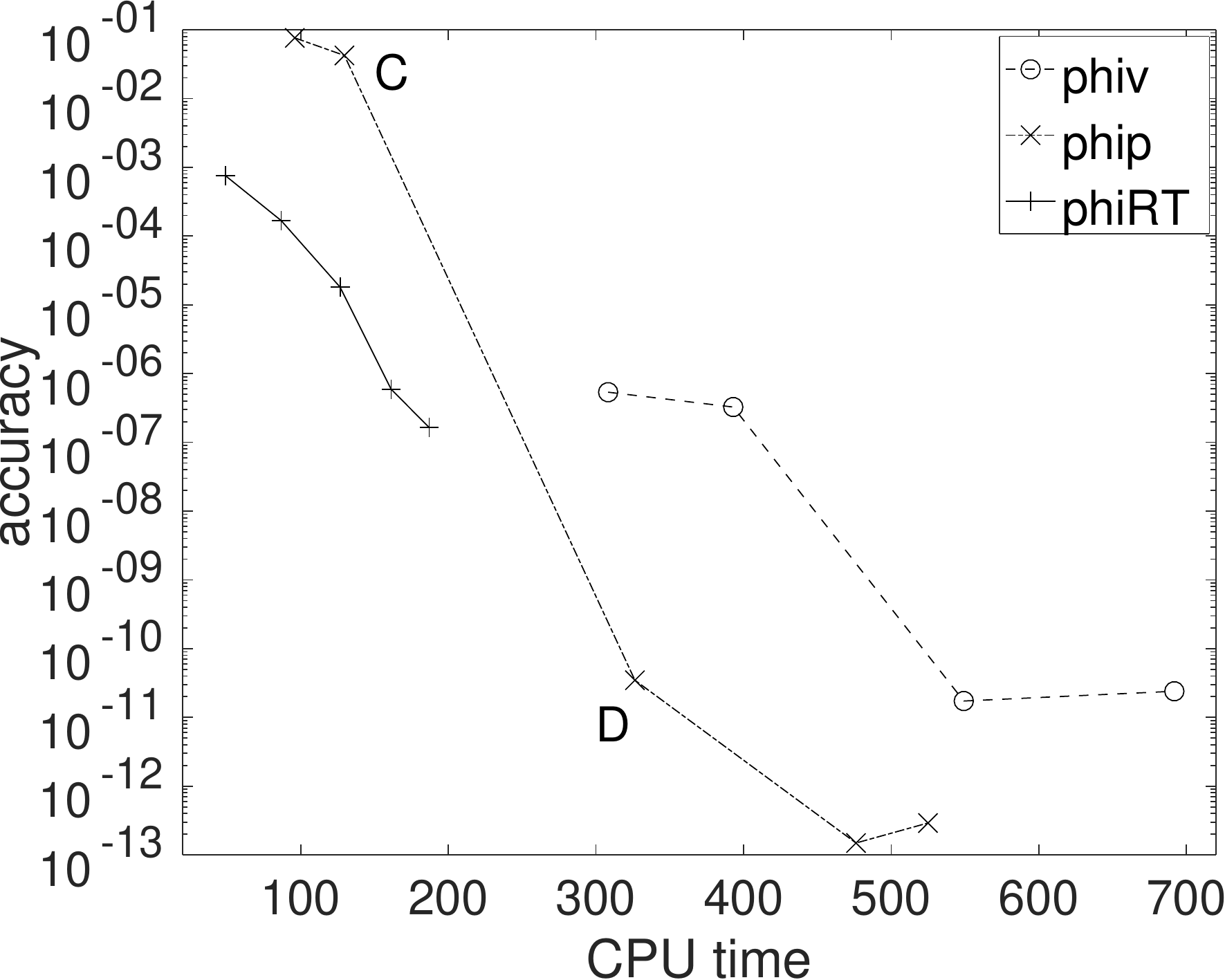}  
\end{center}
\caption{The fourth order finite-volume discretized anisotropic heat equation.
Relative error versus number
of the matrix-vector products (left)
and versus the CPU time (right) for the \phiv{} (dashed line), \phip (dash-dotted line)
and \phiRT{} (solid line).
The restart length is~30 for all the three solvers.
The upper left end of the \phiv{} curve at the top plots is obtained
for tolerance {\tt5e-02} and 
running \phiv{} with tolerance {\tt1e-01} fails with an error message.
The points \textsf{A}, \textsf{B}, \textsf{C}, and \textsf{D} on the \phip{}
curve are obtained for the tolerances
{\tt5e-04}, {\tt1e-04}, {\tt3e-03}, and {\tt1e-03}, respectively.}
\label{f:acc_p4}
\end{figure}


\begin{figure}
\centering{%
\includegraphics[width=0.4\textwidth]{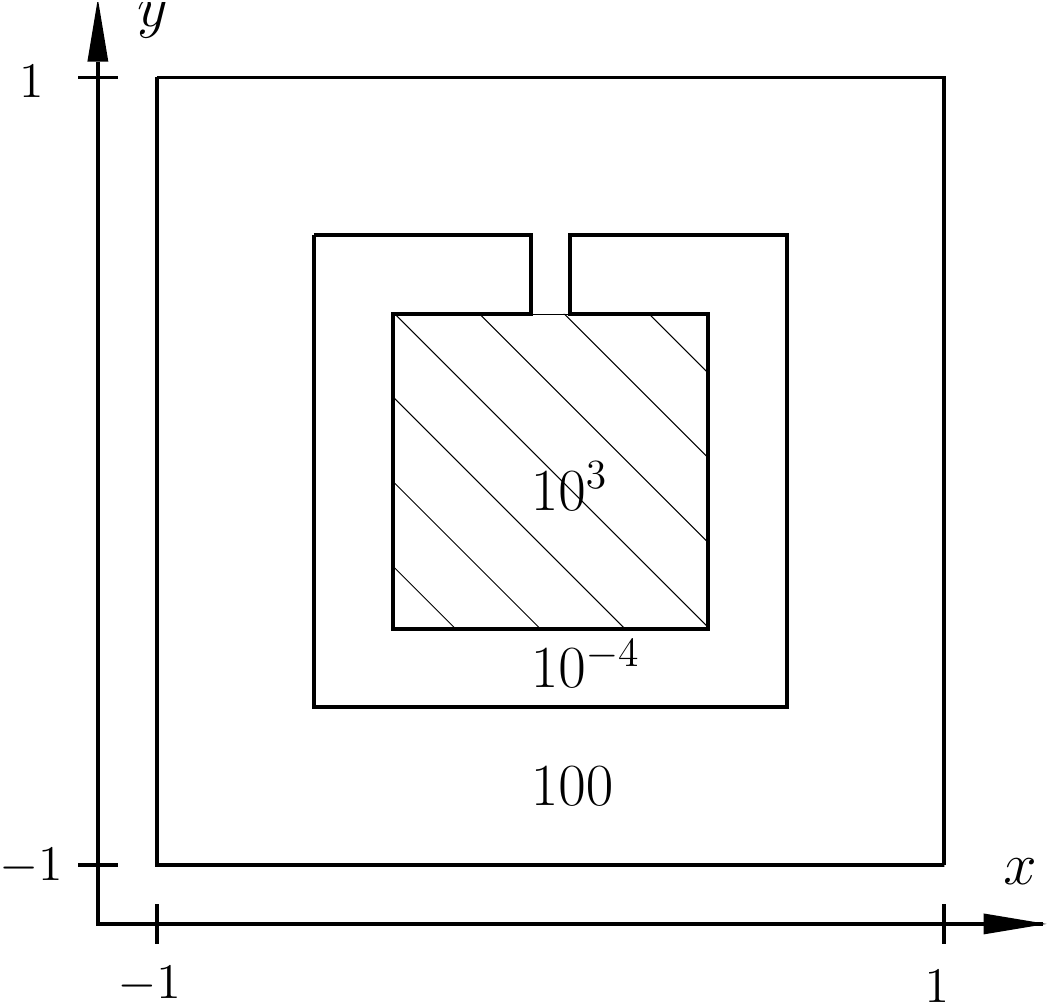}\hfill
\includegraphics[width=0.4\textwidth]{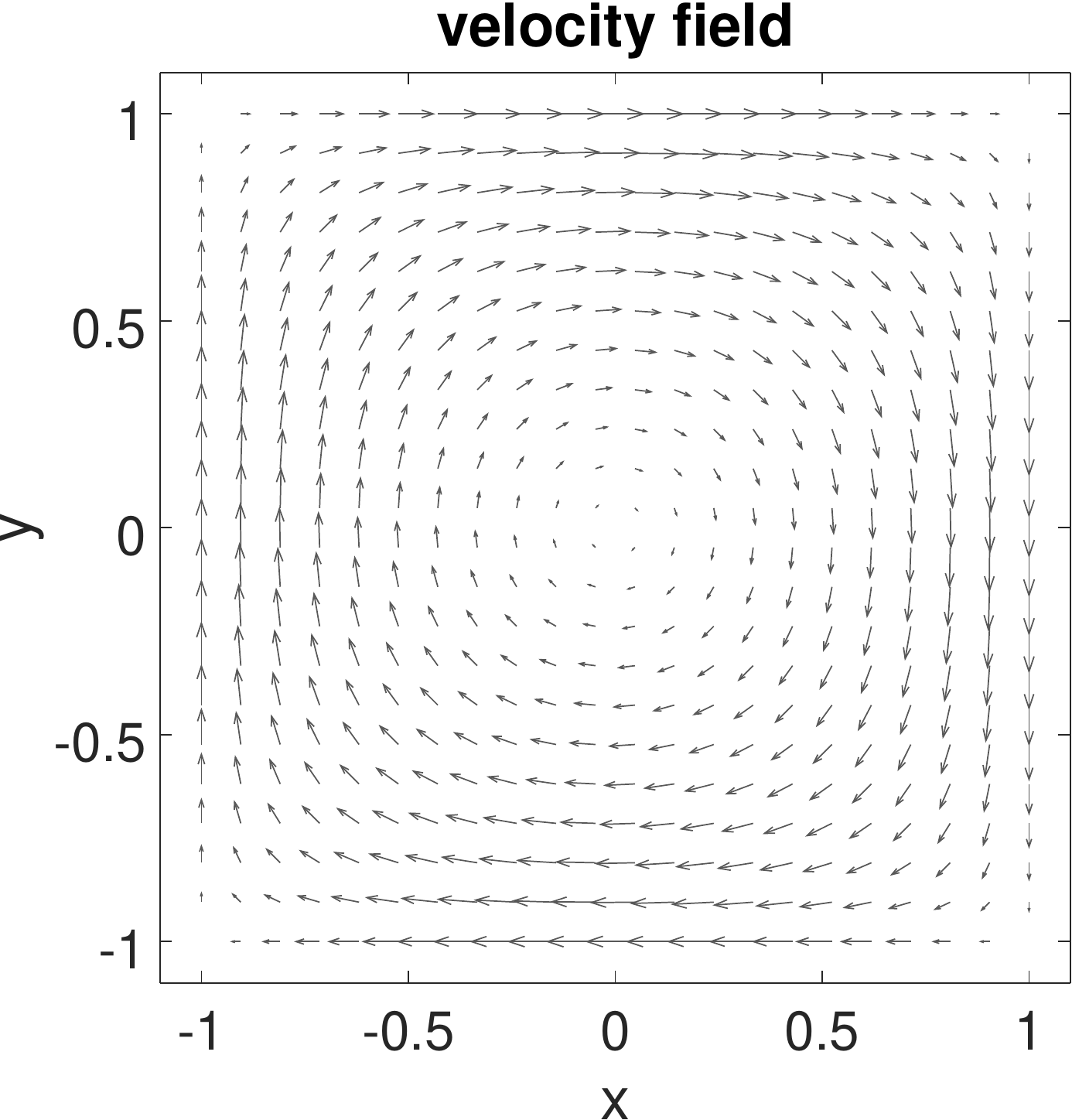}}    
\caption{Diffusion coefficients $D_{1,2}(x,y)$ (left) and velocity field
$[ v_1(x,y), v_2(x,y)]$ (right). The region where $D_{1,2}=10^3$
occupies the square $[-0.4,0.4]\times[-0.4,0.4]$ and is surrounded
by the wall where $D_{1,2}=10^{-4}$ which is $0.2$ wide and has a
$0.1$ broad slit.}
\label{f:cd}
\end{figure}

\subsection{Convection--diffusion discretized by finite differences, complex diffusion}
\label{s:t2}
We now consider a nonsymmetric matrix $A$ resulting from
a standard five point central-difference discretization
of the following convection--diffusion operator:
$$
L[u]=-(D_1u_x)_x-(D_2u_y)_y + \Pe{}\,\left(
  \frac12(v_1u_x + v_2u_y) + \frac12((v_1u)_x + (v_2u)_y) \right),
$$
where $(x,y)\in [-1,1]\times [-1,1]$ 
and the homogeneous Dirichlet conditions are imposed on the boundary.
Here the first derivatives representing the convective terms
are written in a special way.
If the velocity field is divergence-free, i.e.,
if $(v_1)_x + (v_2)_y=0$, then the central-difference
discretization of the convective terms yields a skew-symmetric
matrix~\cite{Krukier79}.  This physically desirable
property~\cite{VerstappenVeldman2003}
guarantees that the first derivatives do not contribute
to the symmetric part of the matrix which is the case
for diffusive upwind discretizations.

The diffusion coefficients $D_{1,2}(x,y)$ are defined as
shown at the left plot in Figure~\ref{f:cd}.  This choice
is inspired by~\cite[test~4]{BiCGStab}.
The velocity field, taken as
$$
v_1(x,y) = y(1-x^2), \qquad
v_2(x,y) = x(y^2-1),
$$
is the recirculating wind adopted from the IFISS
package~\cite{IFISS}, see the right plot in Figure~\ref{f:cd}.
The initial value vector $v$, cf.~\eqref{ivp},
has all its entries set to $0.01$.
For $v=0$ the solution may become
unstable due to appearance of small negative solution values
(recall that central difference discretizations are used
for both diffusion and convection terms).
The components of the source vector $g$ are the values of function
$$
1000 \mathrm{e}^{-100(x^2 +y^2)}
$$
on the finite difference grid.
Uniform $202\times 202$, $402\times 402$
and $802\times 802$ grids are used in the experiments,
so that the problem size is $n=40\,000$,
$n=160\,000$ and $n=640\,000$, respectively.
On the first two grids the Peclet number $\Pe$ is
set to $10$, whereas $\Pe=20$ on the finest grid.
For these Peclet values and all the three grids
the ratio of the norms of the skew-symmetric
and symmetric parts of the matrices, measured
as the maximal row and column norms, is of order $10^{-5}$,
i.e., the matrices are weakly nonsymmetric.
Solution samples of this test problem can be
seen in Figure~\ref{f:cd_sol}.
The reference solution is computed by the \phiv{} function
run with tolerance {\tt1e-14}.

The results of the comparison runs are presented in Table~\ref{t:cd}.
To see clearly the dependence of the attained accuracy on the
computational work and CPU time, we also present plots
in Figure~\ref{f:cd_acc}.  We see that \phiv{} tends to deliver
a much more accurate solution than is requested by given tolerance.
We also see that for the coarsest $202\times 202$ grid our solver is more efficient
than \phip{} in terms of matrix--vector multiplications but
requires more CPU~time.  This effect disappears on the finer grids.
There, for the moderate tolerance values we are interested in,
\phiRT{} clearly outperforms the other solvers in terms of CPU~time
and matrix--vector multiplications.

\begin{figure}
\centering{%
\includegraphics[width=0.4\textwidth]{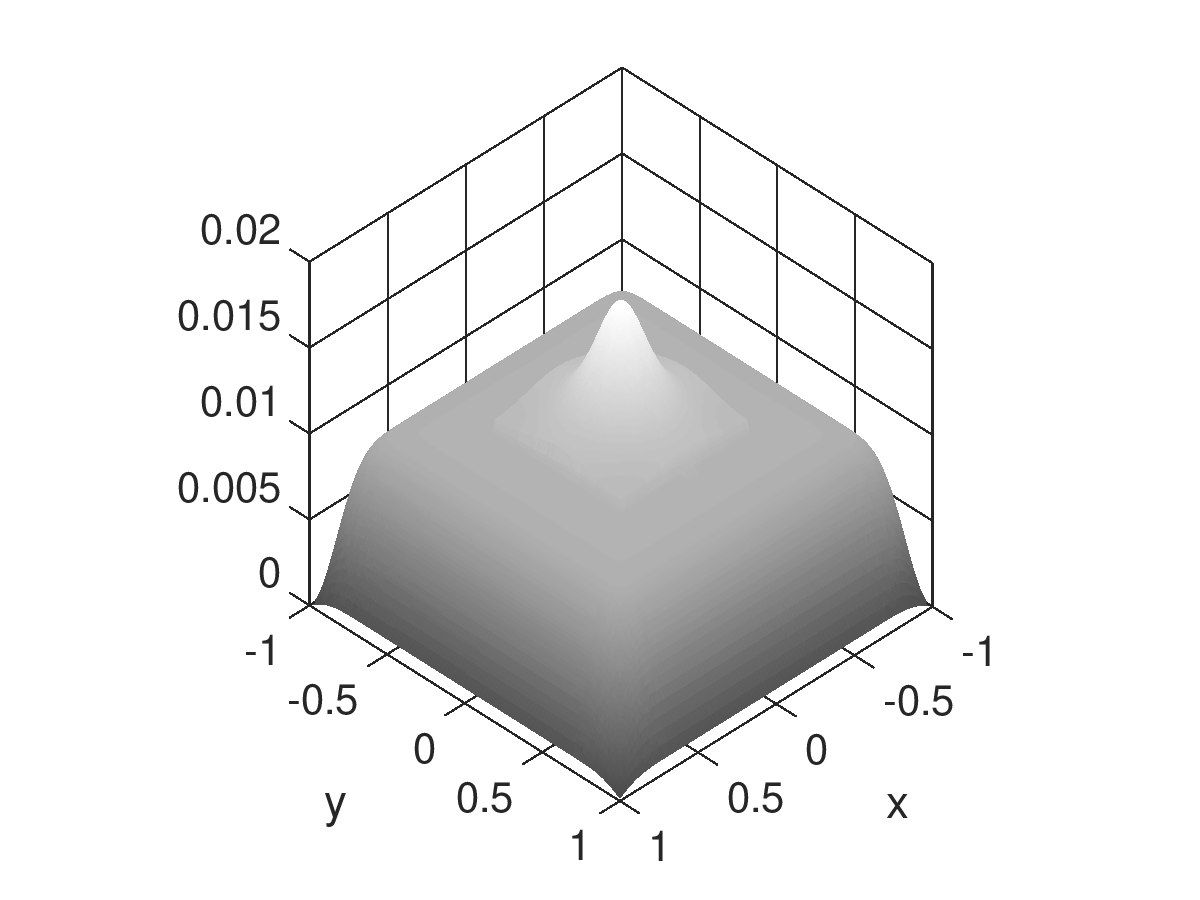}\hfill
\includegraphics[width=0.4\textwidth]{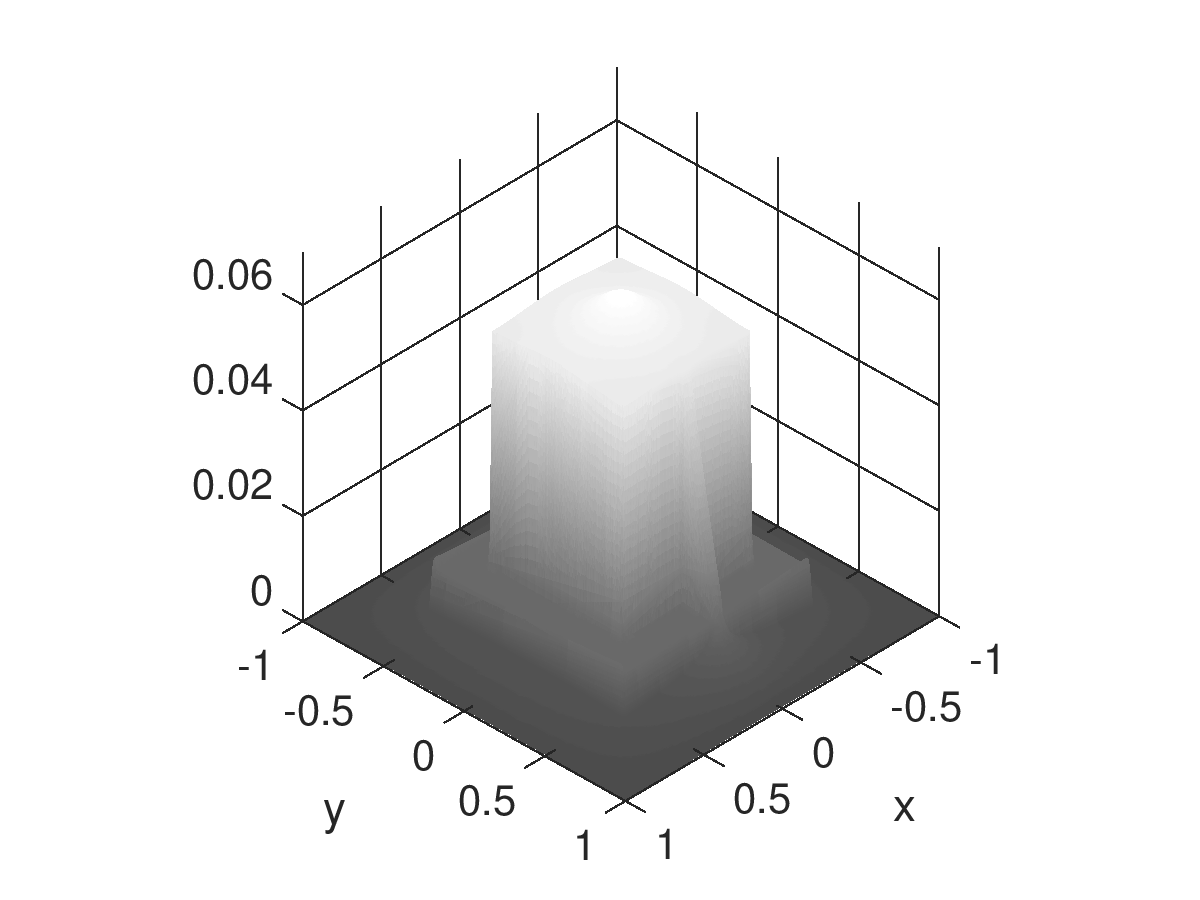}}    
\caption{Solution of finite-difference discretized convection--diffusion
problem at time $t=5\times10^{-5}$ (left) and $t=10^{-3}$ (right),
$202\times 202$ grid, $\Pe=10$. 
At the right plot a correspondence to the values
of the diffusion coefficients $D_{1,2}$ is clearly visible, with
a leakage through the slit, see Figure~\ref{f:cd}.}
\label{f:cd_sol}
\end{figure}

\begin{table}
\caption{Results for the finite-difference discretized convection--diffusion
test problem.}
\label{t:cd}
\centerline{\begin{tabular}{lcc}
\hline\hline
method(Krylov dim.),  & delivered & matvecs /   \\
requested tolerance   &    error  & CPU time, s \\
\hline
\multicolumn{3}{c}{$202\times 202$ grid, $\Pe=10$, $t=10^{-3}$}\\
\phiv($30$), {\tt5e-1}  & {\tt1.73e-08} &  6016 / 16.6   \\
\phiv($30$), {\tt1e-1}  & {\tt1.66e-09} &  6528 / 18.2   \\
\phip($30$), {\tt5e-1}  & {\tt1.67e-04} &  2700 / 8.1     \\
\phip($30$), {\tt1e-1}  & {\tt8.16e-06} &  2850 / 8.6     \\
\phiRT($30$), {\tt1e-2} & {\tt1.36e-04} &  2368 / 9.2    \\
\phiRT($30$), {\tt5e-3} & {\tt3.80e-05} &  2399 / 9.6     \\
\hline
\multicolumn{3}{c}{$402\times 402$ grid, $\Pe=10$, $t=10^{-3}$}\\
\phiv($30$),  {\tt5e-1} & {\tt8.80e-11}  & 21856 / 328 \\
\phiv($30$),  {\tt1e-1} & {\tt1.10e-11}  & 23104 / 340 \\
\phip($30$),  {\tt5e-1} & {\tt7.53e-07}  &  9900 / 160 \\
\phip($30$),  {\tt1e-2} & {\tt1.49e-06}  & 13980 / 221 \\
\phiRT($30$), {\tt1e-2} & {\tt3.09e-04}  &  6488 / 104 \\
\phiRT($30$), {\tt1e-3} & {\tt4.53e-06}  & 10230 / 165 \\
\phiv($20$),  {\tt5e-1} & {\tt1.53e-10}   & 30756 / 342 \\
\phip($20$),  {\tt5e-1} & {\tt8.21e-06}   & 14520 / 177 \\
\phiRT($20$), {\tt1e-2} & {\tt4.50e-04}   &  7433 /  95 \\
\phiRT($20$), {\tt1e-3} & {\tt9.78e-06}   & 15012 / 183 \\
\hline
\multicolumn{3}{c}{$802\times 802$ grid, $\Pe=20$, $t=5\cdot10^{-5}$}\\
\phiv($30$),  {\tt5e-1} & {\tt3.22e-11} & 6304 / 463 \\
\phip($30$),  {\tt1e-1} & {\tt5.03e-05} & 2310 / 186 \\
\phiRT($30$), {\tt1e-2} & {\tt3.51e-05} & 2100 / 167 \\
%
\phiv($20$),  {\tt1e-1} & {\tt1.53e-11} & 8690 / 495 \\
\phip($20$),  {\tt1e-2} & {\tt6.01e-06} & 4200 / 250 \\
\phiRT($20$), {\tt1e-3} & {\tt3.27e-06} & 3431 / 201 \\
\hline
\end{tabular}}
\end{table}

\begin{figure}
\includegraphics[width=0.4\textwidth]{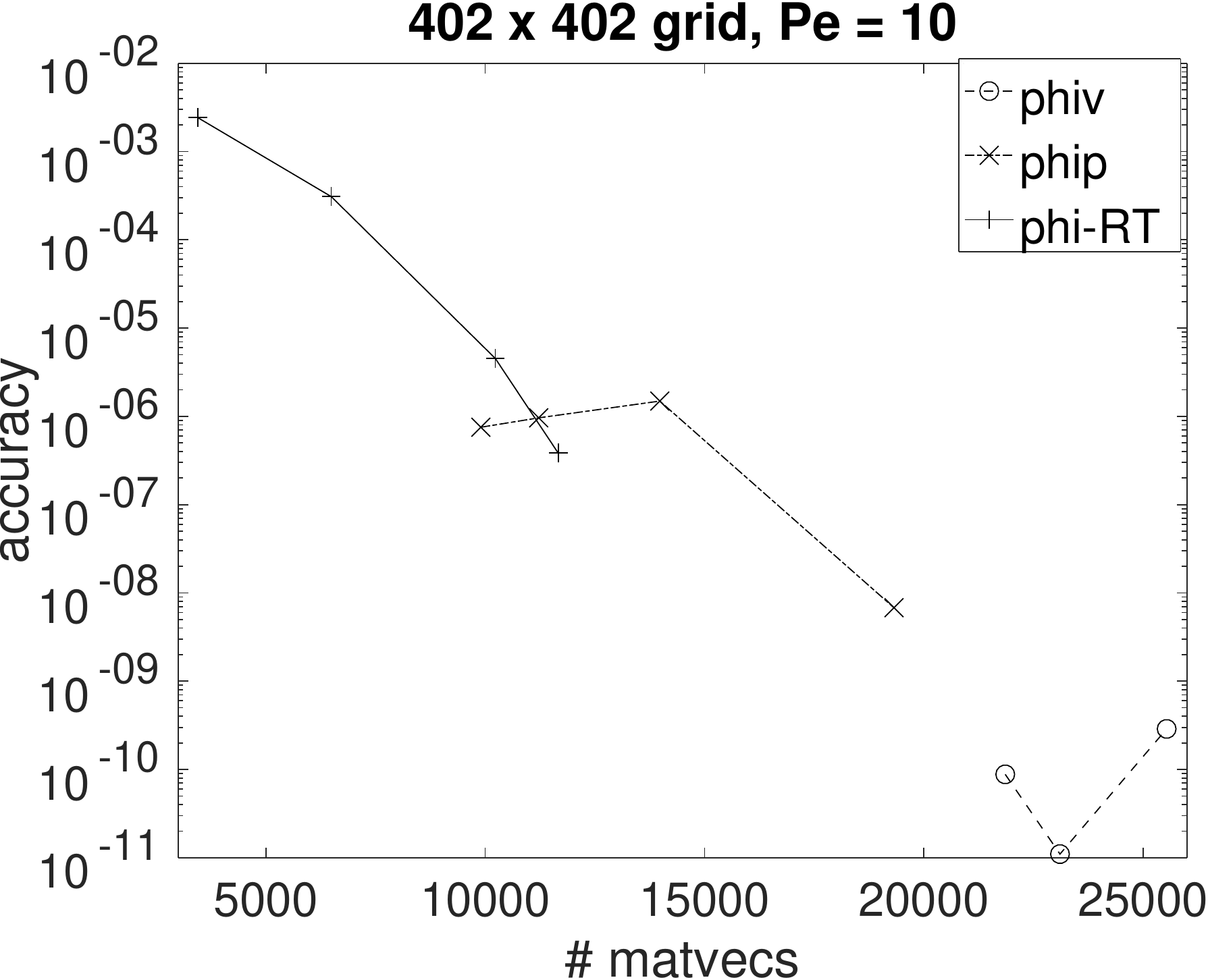}\hfill
\includegraphics[width=0.4\textwidth]{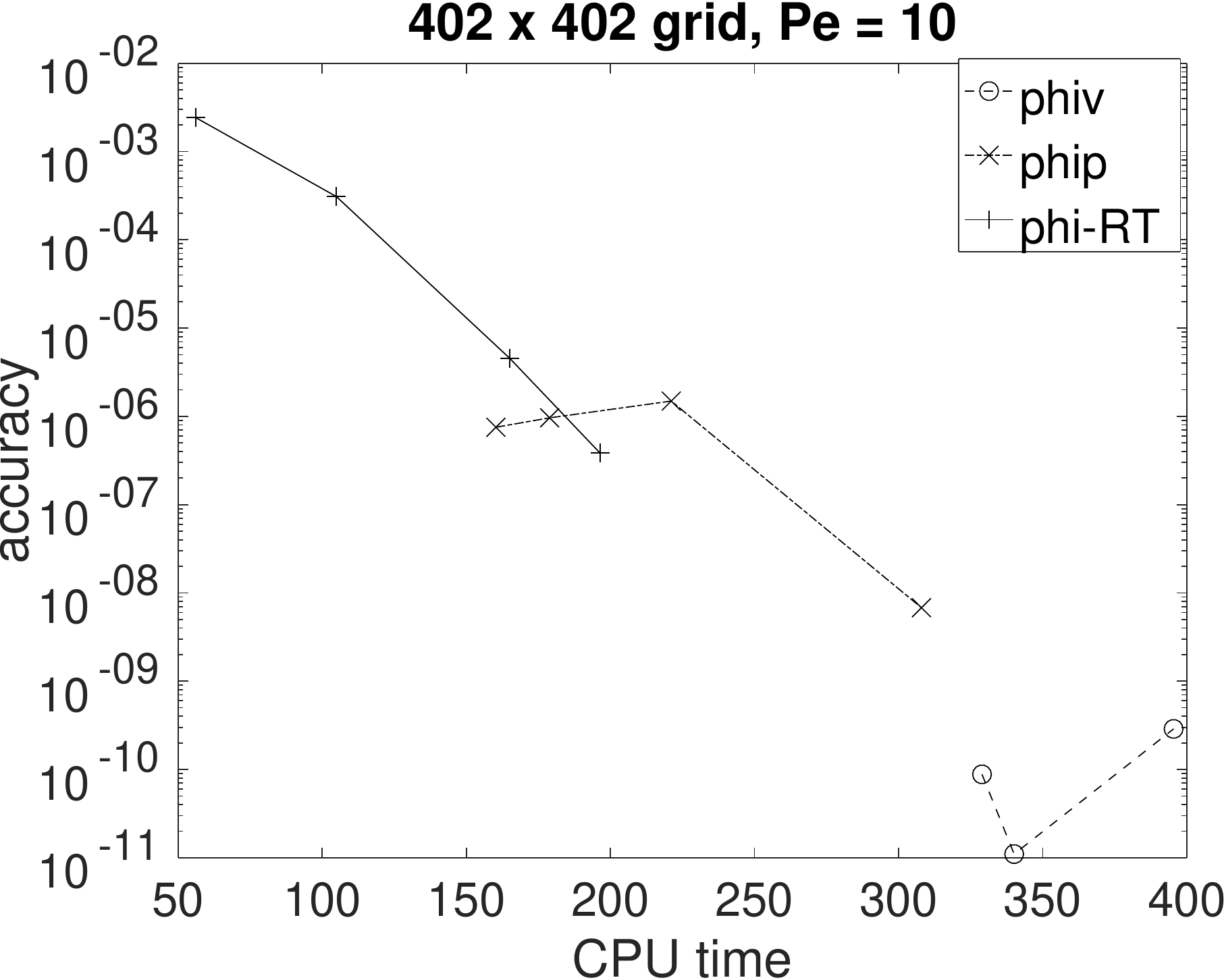}
\\[2ex]
\includegraphics[width=0.4\textwidth]{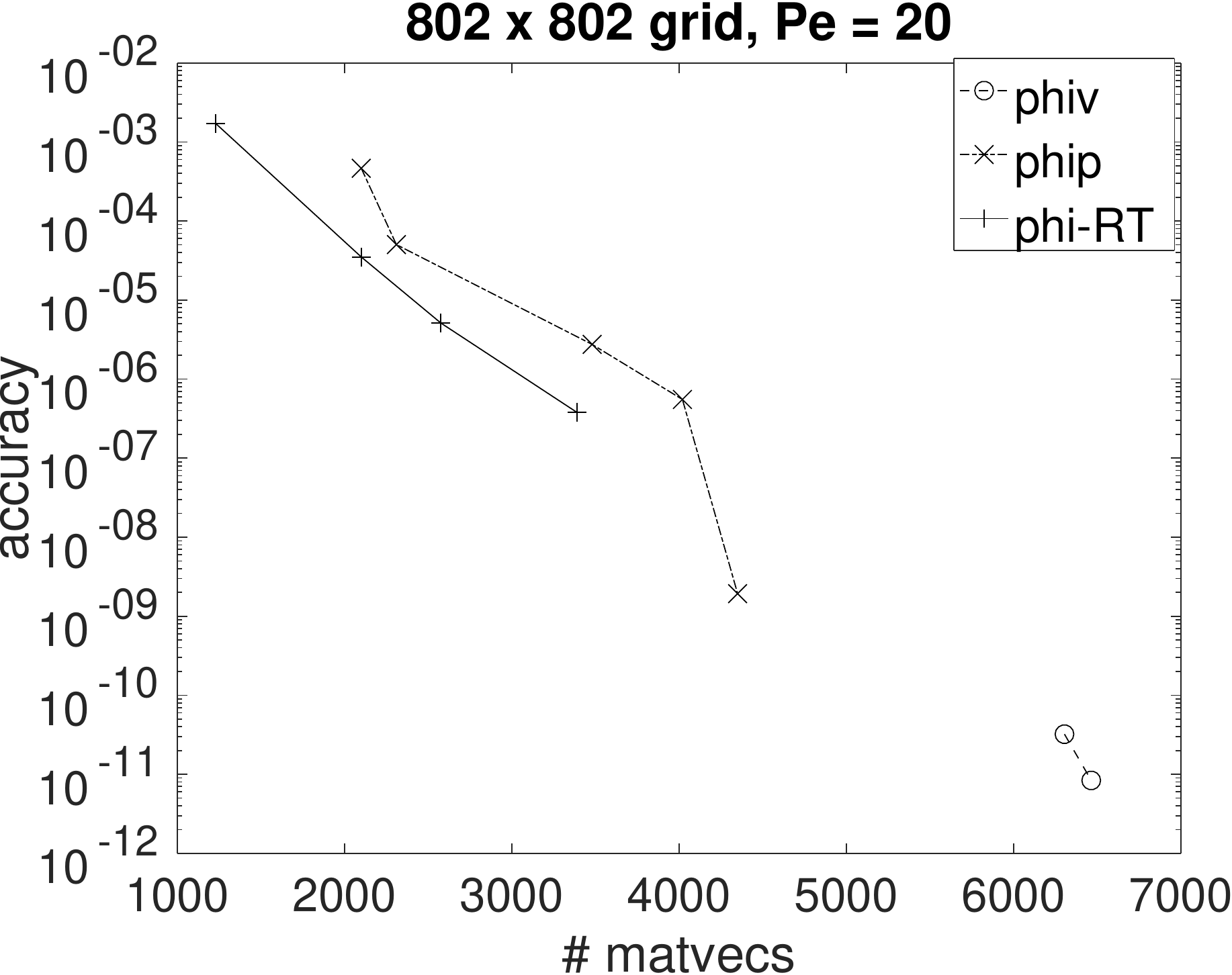}\hfill
\includegraphics[width=0.4\textwidth]{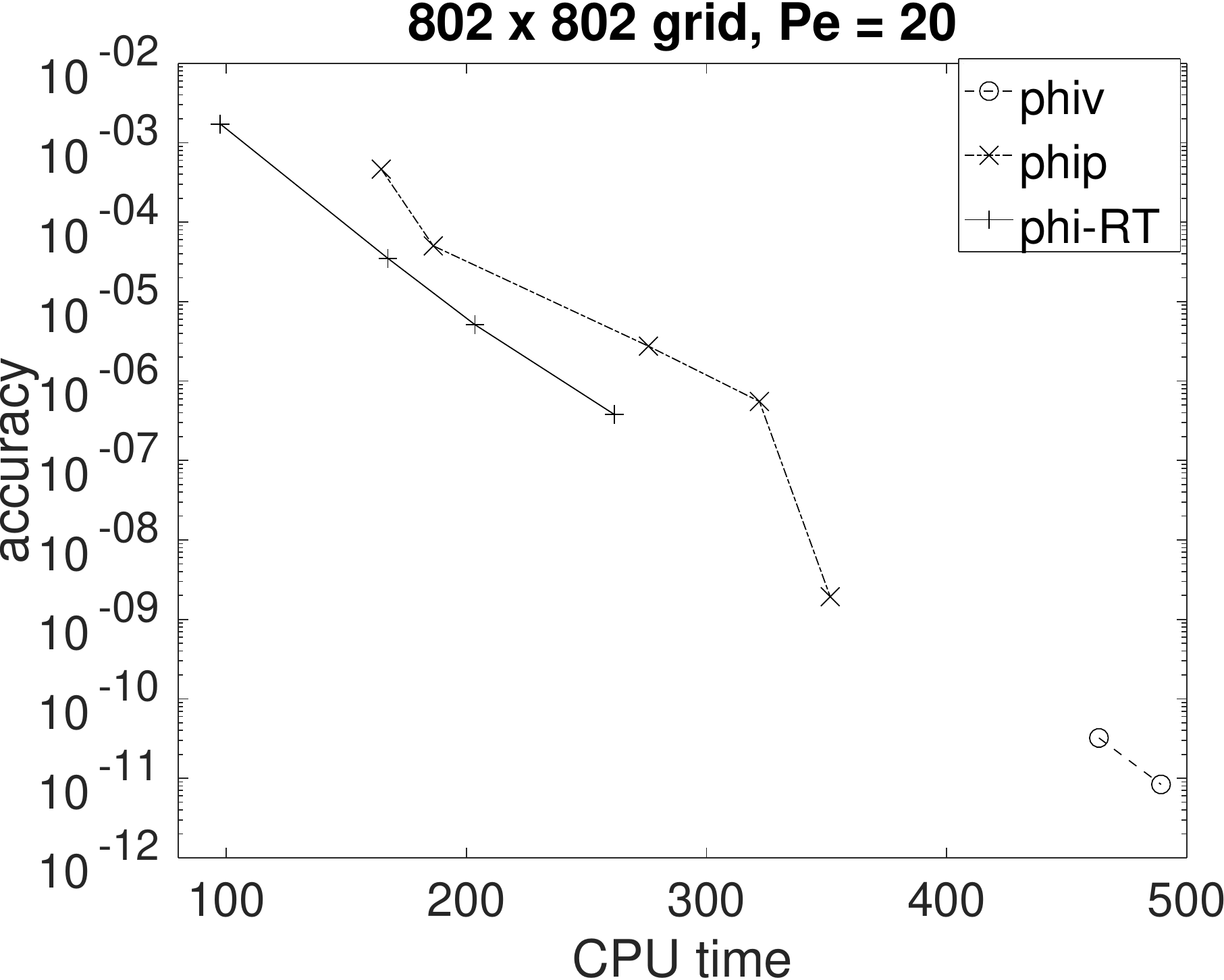}
\caption{Finite-difference discretized convection--diffusion
problem.
Relative error versus number
of the matrix-vector products (left)
and versus the CPU time (right) for \phiv{} (dashed line), \phip{} (dash-dotted line)
and \phiRT{} (solid line).
Top plots: $402\times 402$ grid, $\Pe=10$, $t=10^{-3}$,
bottom plots:
$802\times 802$ grid, $\Pe=20$, $t=5\cdot10^{-5}$.
The restart length is~30 for all the three solvers.
The loosest tolerance value for \phiv{} and \phip{} solvers
is {\tt5e-01}.
Hence, 
it is problematic to obtain cheaper, less accurate results
with these solvers.
}
\label{f:cd_acc}
\end{figure}

\subsection{Convection--diffusion discretized by finite elements,
  dominating convection}
\label{s:t3}
In this test IVP~\eqref{ivp} is solved where $A$ and $g$ are produced
by the IFISS software package~\cite{IFISS,IFISSacm2007,IFISS_sirev}.
More precisely, $A$ results from a finite-element discretization
of the following two-dimensional convection--diffusion problem:
\begin{equation}
\label{bp_ifiss}  
  -\nu\nabla^2u  + \vec{w}\cdot\nabla u = 0,\qquad
  u=u(x,y),\quad (x,y)\in[-1,1]\times[-1,1],
\end{equation}
where $\nu$ is viscosity parameter and Dirichlet boundary conditions
$u=0.5$ are imposed at all boundary points except for $y=1$, where
$$
u(x,1)=\frac12 + \frac12(1 -x^2).
$$
The initial value vector $v$ in~\eqref{ivp} is set to zero in this
test.

Discretization of~\eqref{bp_ifiss} in IFISS by
bilinear quadrilateral ($Q_1$) finite elements with 
the streamline upwind Petrov--Galerkin (SUPG) stabilization
leads to a linear system $Au=g$, where $g$ enforces
the Dirichlet boundary conditions.
Thus, the IVP we solve is a nonstationary version
of the stationary discretized convection--diffusion problem.
We use uniform $512\times 512$ and $1024\times1024$ grids
with viscosity values $\nu=1/6400$ and $\nu=1/12800$,
respectively.  In both cases the maximum finite element
grid Peclet number, estimated by IFISS as
$$
\frac{1}{2\nu}\min\left\{\frac{h_x}{\cos\alpha},
\frac{h_y}{\sin\alpha}\right\}\|\vec{w}\|_2,
\quad \alpha=\arctan\frac{w_y}{w_x},
$$
is $\approx 25$, where $h_{x,y}$ and $w_{x,y}$ are respectively
the element sizes and the components of the velocity $\vec{w}$. 
%

For these viscosity values we have
$\frac12\|A+A^T\|_1=1$,
$\frac12\|A-A^T\|_1\approx 0.0078$ on the $512\times 512$ grid
and
$\frac12\|A+A^T\|_1=1$,
$\frac12\|A-A^T\|_1\approx 0.0039$ on the $1024\times1024$ grid.
The length of the time interval is set to $t=2\cdot 10^4$.
For this $t$ the solution $y(t)$ of IVP~\eqref{ivp} still
differs significantly from the stationary solution $A^{-1}g$
to which $y(t)$ converges for growing $t$,
see Figure~\ref{ifiss:sol}.
The reference solution is computed by \phiv{} with tolerance
{\tt1e-13}.

\begin{figure}
\centering{%
\includegraphics[width=0.4\textwidth]{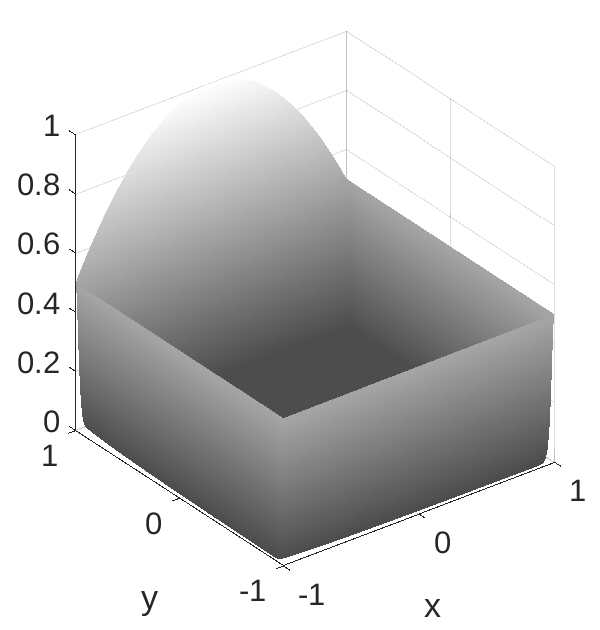}\hfill
\includegraphics[width=0.4\textwidth]{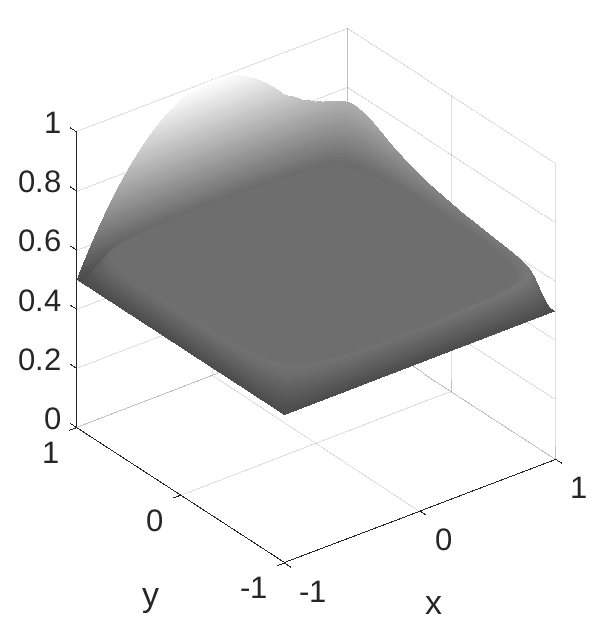}}      
\caption{Solution of the IFISS convection-dominated problem
$y(t)$ at $t=2\cdot 10^4$ (left) and the stationary
solution $A^{-1}g$ (right) on the $512\times 512$ grid.}
\label{ifiss:sol}  
\end{figure}

The results are presented in Tables~\ref{t:ifiss512}
and~\ref{t:ifiss1024} and in Figure~\ref{f:acc_ifiss}.
We see that \phiRT{} appears to be more efficient for this
test and that obtaining a time error within the desired tolerance
range $[10^{-7},10^{-3}]$ can be problematic with \phiv{} and \phip.
On both grids the \phip{} solver demonstrates a remarkable
``all-or-nothing'' behavior:  on the $1024\times 1024$ grid perturbing the
requested tolerance from {\tt7e-05} to {\tt8e-05} yields a drop
in the delivered accuracy from {\tt8.15e-07} to
{\tt5.36e-01}.  This last accuracy value is not plotted
in Figure~\ref{f:acc_ifiss} as it is too low
for a satisfactory PDE solution.

\begin{table}
\caption{Results for the finite-element discretized
convection-dominated convection--diffusion
test problem, $\nu=1/6400$, $512\times 512$ grid}
\label{t:ifiss512}
\centerline{\begin{tabular}{lcc}
\hline\hline
method(Krylov dim.),  & delivered & matvecs /   \\
requested tolerance   &   error   & CPU time, s \\
\hline
\phiv($30$),       {\tt1e-06} & {\tt1.36e-04}   & 160 / 5.6  \\
\phiv($30$),       {\tt1e-08} & {\tt1.25e-07}   & 224 / 7.7  \\
\phip($30$),       {\tt1e-04} & {\tt1.34e-01}   & 31  / 1.2 \\
\phip($30$),       {\tt1e-05} & {\tt1.40e-08}   & 210 / 7.7  \\
\phip($30$),       {\tt1e-06} & {\tt1.06e-09}   & 180 / 6.7  \\
\phiRT($30$),      {\tt1e-06} & {\tt2.60e-04}   & 89  / 3.3  \\
\phiRT($30$),      {\tt1e-08} & {\tt9.63e-07}   & 160 / 5.7  \\
\hline                                        
\phiv($10$),       {\tt1e-06} & {\tt2.69e-06}   & 216 / 4.3 \\
\phip($10$),       {\tt1e-06} & {\tt5.65e-11}   & 320 / 7.7 \\
\phiRT($10$),      {\tt1e-06} & {\tt3.65e-04}   & 166 / 4.1 \\
\hline
\end{tabular}}
\end{table}

\begin{figure}
\begin{center}
\includegraphics[height=0.37\textwidth]{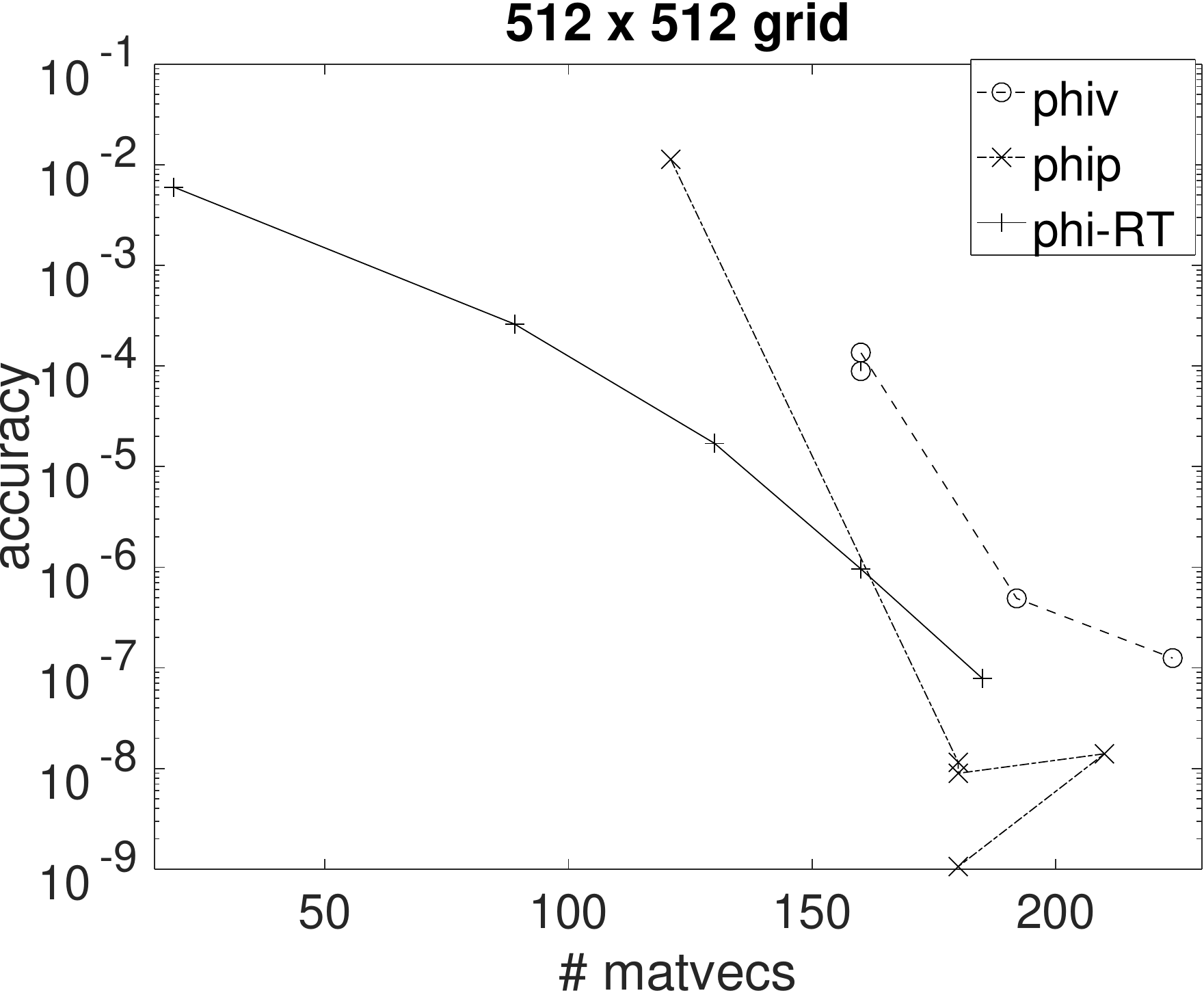}\hfill
\includegraphics[height=0.37\textwidth]{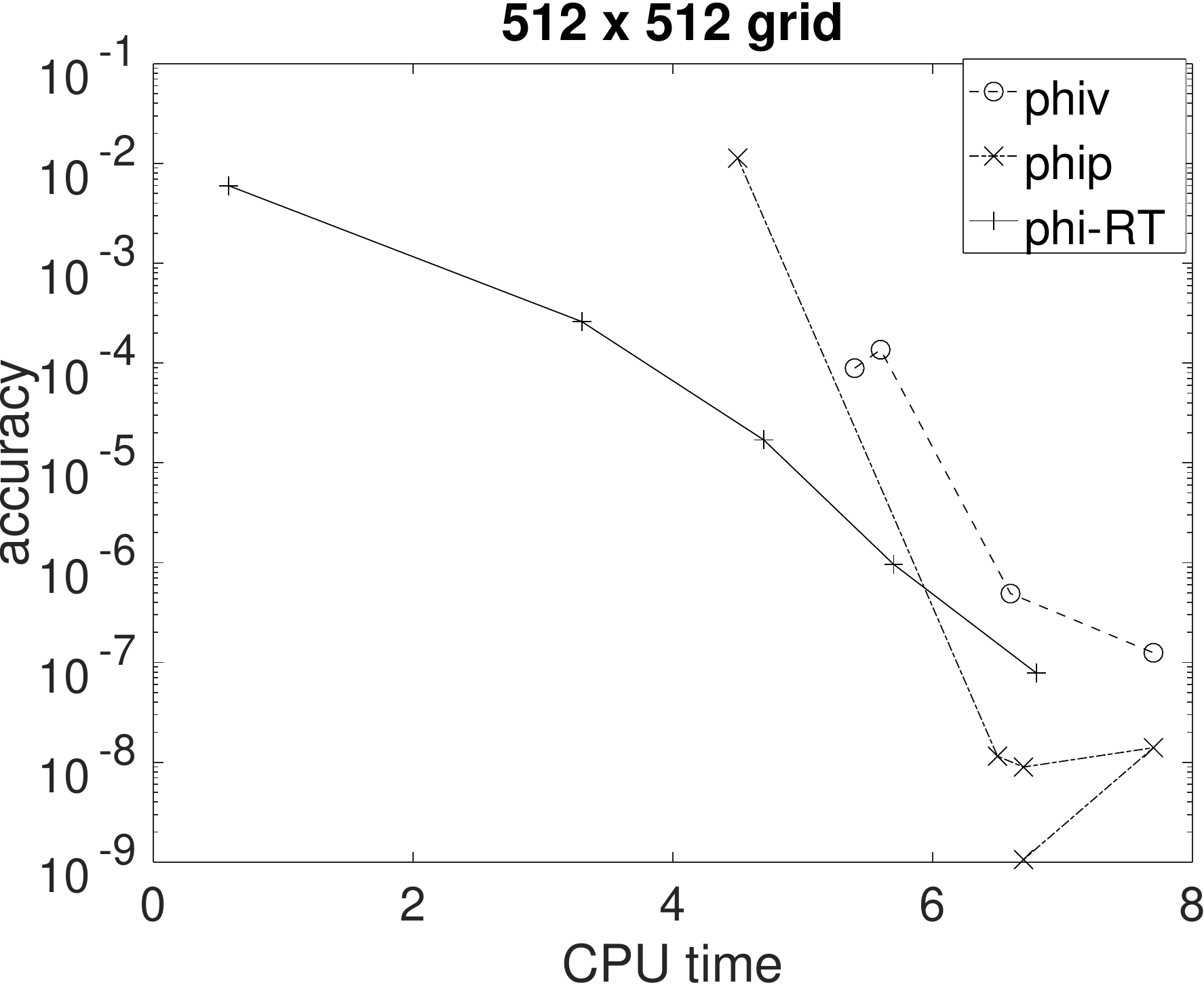}
\\[2ex]
\includegraphics[height=0.37\textwidth]{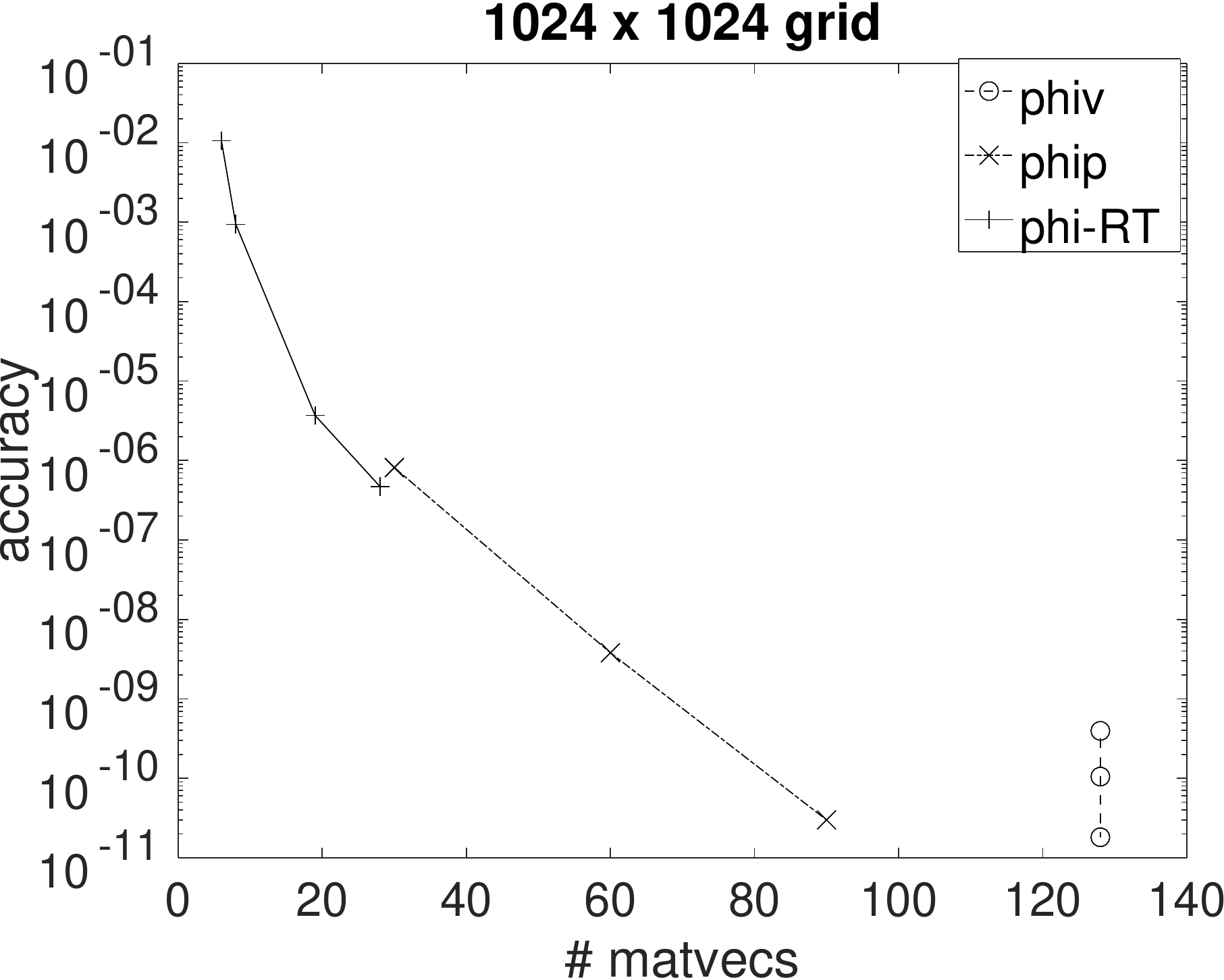}\hfill
\includegraphics[height=0.37\textwidth]{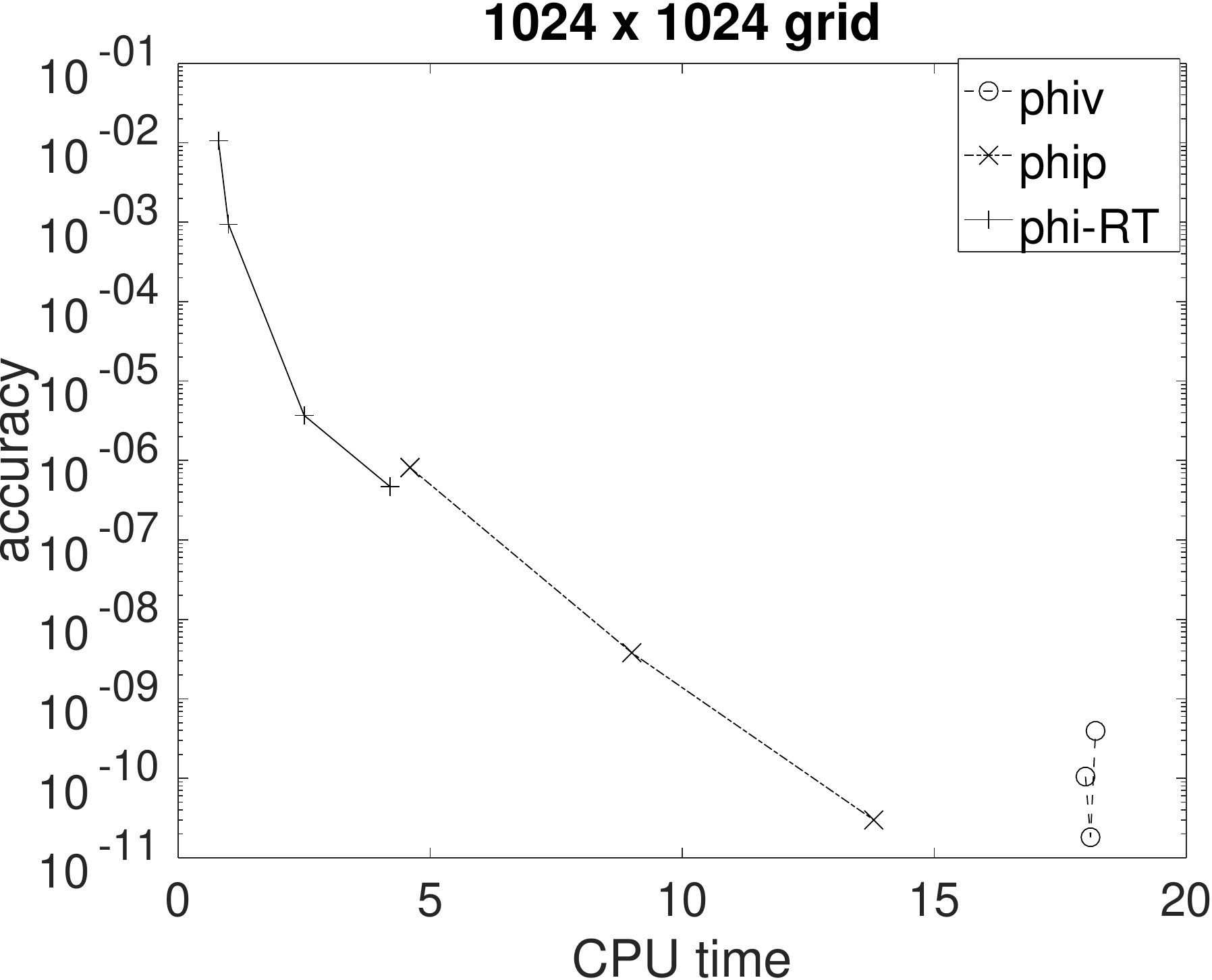}
\end{center}
\caption{The finite-element discretized
convection-dominated convection--diffusion test problem.
Relative error versus number of the matrix-vector products (left)
and versus the CPU time (right) for \phiv{} (dashed line), \phip{}
(dash-dotted line)
and \phiRT{} (solid line).
The restart length is~30 for all the three solvers.
Perturbing requested tolerance from {\tt6e-05} to {\tt7e-05}
for \phip{} yields a change in delivered accuracy from
{\tt1.15e-08} to {\tt1.13e-02}, see the two upper left points of the
dash-dotted line in the upper plots, $512\times 512$ grid.
A similar effect is observed for $1024\times 1024$ grid.
}
\label{f:acc_ifiss}
\end{figure}

\begin{table}
\caption{Results for the finite-element discretized
convection-dominated convection--diffusion
test problem, $\nu=1/12800$, $1024\times 1024$ grid}
\label{t:ifiss1024}
\centerline{\begin{tabular}{lcc}
\hline\hline
method(Krylov dim.),  & delivered & matvecs /   \\
requested tolerance   &   error   & CPU time, s \\
\hline
\phiv($30$),      {\tt1e-04} & {\tt1.81e-11}  & 128 / 18.1    \\
\phiv($30$),      {\tt1e-06} & {\tt3.95e-10}  & 128 / 18.2    \\
\phip($30$),      {\tt1e-04} & {\tt5.36e-01}  &  1 / 0.4      \\
\phip($30$),      {\tt1e-05} & {\tt3.80e-09}  &  60 / 9.0     \\
\phip($30$),      {\tt1e-06} & {\tt2.99e-11}  &  90 / 13.8    \\
\phiRT($30$),     {\tt1e-05} & {\tt1.06e-02}  &  6  / 0.8     \\
\phiRT($30$),     {\tt1e-06} & {\tt9.39e-04}  &  8  / 1.0     \\
\phiRT($30$),     {\tt1e-08} & {\tt3.70e-06}  & 19 / 2.5      \\
\phiRT($30$),     {\tt1e-09} & {\tt4.72e-07}  & 28 / 4.2      \\
\hline                                       
\phiv($10$),      {\tt1e-05} & {\tt3.54e-04}  & 60 / 5.2      \\
\phiv($10$),      {\tt1e-06} & {\tt8.86e-06}  & 96 / 8.2      \\
\phip($10$),      {\tt1e-04} & {\tt5.36e-01}  &  1 / 0.3      \\
\phip($10$),      {\tt1e-05} & {\tt1.39e-08}  & 100 / 9.9     \\
\phiRT($10$),     {\tt1e-05} & {\tt1.06e-02}  &  6 / 0.7      \\
\phiRT($10$),     {\tt1e-06} & {\tt9.39e-04}  &  8 / 0.9      \\
\phiRT($10$),     {\tt1e-07} & {\tt3.96e-05}  & 45 / 4.4      \\
\hline
\end{tabular}}
\end{table}

\section{Conclusions}
\label{s:concl}
We have proposed a Krylov subspace method for computing
matrix-vector products with the $\varphi$ matrix function.
It is based on the residual notion~\eqref{res}, which is essentially used
in our algorithm both as a stopping criterion and for efficient
restarting.
Our analysis provides true upper bounds for the residual, with no
asymptotic estimates, and shows convergence of the restarted method
for an arbitrary final time interval $[0,t]$, $t<\infty$, and
for any restart length (i.e., Krylov subspace dimension). 

Since the residual can be seen as the backward error,
the residual based stopping criterion and residual-time (RT) restarting
provide a reliable error control.  This is confirmed by the performance of
the method: in all the test runs it
either outperforms the \phiv{} and \phip{} solvers or delivers
a comparable efficiency.
More importantly, in all the test runs our method appears to deliver a
monotone dependence of the obtained accuracy on the required tolerance.
The \phiv{} and \phip{} solvers have in some cases difficulties with delivering
error in the desired moderate PDE tolerance
range $[10^{-7},10^{-3}]$; they tend to produce more accurate and more
expensive results than requested.

Thus, the presented residual based Krylov subspace method for
evaluating matrix-vector products with the $\varphi$ matrix function
appears to be a promising approach useful in practical computations.

\bibliography{matfun,mxw,my_bib}
\bibliographystyle{siamplain}
  
\end{document}